\documentclass[11pt,reqno]{amsart}
\usepackage[text={160mm, 235mm},centering]{geometry}
\usepackage{float}
\usepackage{appendix}
\usepackage{graphicx}
\usepackage{chngpage}
\usepackage{multicol}
\usepackage{subcaption}
\usepackage{setspace}
\usepackage{multirow}
\usepackage[colorlinks=true, citecolor=blue, urlcolor=blue, linkcolor=blue]{hyperref}
\usepackage[all,cmtip]{xy}
\usepackage{amssymb}
\usepackage{tikz-cd}
\usepackage{amsmath}
\usepackage{relsize}
\usepackage{longtable}
\usepackage{pstricks}
\usepackage{color}
\usepackage{ mathrsfs }
\usepackage[all]{xy}
\usepackage{mathabx,epsfig}

\theoremstyle{plain}
\newtheorem{thm}{Theorem}[section]
\newtheorem{theorem}[thm]{Theorem}

\newtheorem{lemma}[thm]{Lemma}
\newtheorem{corollary}[thm]{Corollary}
\newtheorem{proposition}[thm]{Proposition}
\theoremstyle{plain}

\theoremstyle{definition}
\newtheorem{remark}[thm]{Remark}

\newtheorem{definition}[thm]{Definition}

\numberwithin{equation}{section}

\newcommand{\romea}{\uppercase\expandafter{\romannumeral1}}
\newcommand{\romeb}{\uppercase\expandafter{\romannumeral2}}

\newcommand{\Aut}{{\rm Aut}}

\newcommand{\Diag}{{\rm diag}}
\newcommand{\Rank}{{\rm rk}}

\newcommand{\GL}{{\rm GL}}

\newcommand{\SL}{{\rm SL}}

\newcommand{\NS}{{\rm NS}}

\newcommand{\C}{{\mathbb C}}

\renewcommand{\P}{{\mathbb P}}
\newcommand{\Q}{{\mathbb Q}}
\newcommand{\R}{{\mathbb R}}

\newcommand{\Z}{{\mathbb Z}}

\makeatletter
\@namedef{subjclassname@2020}{%
	\textup{2020} Mathematics Subject Classification}
\makeatother

\begin{document}
	
	\title{A smooth quintic surface with 85 lines and Picard number 43}
	
	\author{Xun Yu and Zigang Zhu}
	
	\address{Center for Applied Mathematics and KL-AAGDM, Tianjin University, Weijin Road 92, Tianjin 300072, P.R. China}
	
	\email{xunyu@tju.edu.cn, zhzg0313@tju.edu.cn}
	
	\subjclass[2020]{Primary 14J25; Secondary 14C22, 14N20}
	
	\begin{abstract}
		We construct a pencil of complex quintic surfaces with every smooth member containing exactly $85$ lines. We show that one smooth member has Picard number $43$ and by introducing the notion of a $d$-saturated lattice, we prove that its N\'eron--Severi group is generated by the lines and four conics. We also construct smooth quintic surfaces with $79$ lines, including examples with $24$ pairwise skew lines, and a smooth surface of degree $10$ with $356$ lines, including $112$ pairwise skew lines.
	\end{abstract}
	\maketitle
	\setcounter{tocdepth}{1}
	\section{Introduction}
    We work over the complex number field $\C$. Counting lines on smooth surfaces in   $\P^3$ of degree $d\geq3$ is a classical problem in algebraic geometry. It is well-known that every smooth cubic surface contains exactly $27$ lines. A smooth quartic surface with $64$ lines was discovered by Schur \cite{Sch82} in 1882. In 1943, Segre \cite{Seg43} claimed that this number is maximal for smooth quartic surfaces, but his arguments contain a flaw which was detected and corrected by Rams--Sch\"utt \cite{RS15} using elliptic fibrations; see also Degtyarev--Itenberg--Sert\"oz \cite{DIS17} for a different approach based on lattice theory and the Torelli theorem for K3 surfaces.  For $d\ge 5$, the maximal number in question is still unknown (see e.g. \cite{Miy84}, \cite{BS07}, \cite{Miy09}, \cite{Kol15}, \cite{EH16}, \cite{RS20}, \cite{CZ22}, \cite{BR23}). For $d=5$, this number is at most $127$ (\cite[Theorem 1.2]{RS20}). On the other hand, the previous record for the number of lines on a smooth quintic surface was $75$, attained by Fermat quintic and Barth quintic (see \cite{Xie10}, \cite{RS14}). Our first main result establishes a new record of the number of lines on a smooth quintic surface.    
    \begin{theorem}\label{thm:mainA}
    	For every $t\in\C\setminus\{0,1\}$, the quintic surface
    	$X_t\subset\P^3$ defined by
    	$$(x_3-x_4)(x_1^4-3x_2^4)-6(x_3+x_4)x_1^2x_2^2+x_3x_4(x_4^3-tx_3^3)=0$$
    	is smooth and contains exactly $85$ lines.
    \end{theorem}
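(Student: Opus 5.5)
The plan is to exploit the structure of the equation $F_t=(x_3-x_4)Q(x_1,x_2)-6(x_3+x_4)x_1^2x_2^2+x_3x_4(x_4^3-tx_3^3)$, where $Q=x_1^4-3x_2^4$. The form $F_t$ is linear in the pair $(x_3,x_4)$ over the $(x_1,x_2)$-part, plus a binary quintic in $(x_3,x_4)$. First I will list the obvious lines.

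\textbf{Obvious lines.} The line $\{x_3=x_4=0\}$ lies on every $X_t$. More generally, consider the projection from this line, $\pi\colon(x_1:x_2:x_3:x_4)\mapsto(x_3:x_4)$. It exhibits $X_t$, after blowing up the base line, as a fibration by plane quartic curves. Any line meeting $\{x_3=x_4=0\}$ lies in a plane $x_4=\lambda x_3$. The residual quartic in that plane is
\[
(1-\lambda)Q-6(1+\lambda)x_1^2x_2^2+\lambda(\lambda^3-t)x_3^4,
\]
which is a binary quartic in $(x_1,x_2)$ plus a multiple of $x_3^4$. Such a curve contains lines exactly when it degenerates. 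This happens when $\lambda(\lambda^3-t)=0$: the fibre is then a union of four concurrent lines through $(0:0:1:\lambda)$. It also happens when $\lambda=\infty$, i.e.\ the plane $x_3=0$. I also expect degeneration at special values of $\lambda$ where the binary quartic becomes a perfect square times something, i.e.\ where it factors compatibly so the quartic splits into conics or lines.

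The design suggests that the constants $3$ and $6$ make the binary quartic $(1-\lambda)(x_1^4-3x_2^4)-6(1+\lambda)x_1^2x_2^2$ special for additional $\lambda$, for instance having a multiple root, or making the quartic a product of two conics each splitting further. I will count these lines explicitly. The planes $\lambda\in\{0,\infty\}\cup\{\lambda^3=t\}$ give $5\cdot4=20$ lines, plus the base line. The remaining lines must come from further planes and from lines skew to the base line. Symmetries help organize this: $x_2\mapsto ix_2$ together with $x_1\mapsto \zeta_8$-type scalings preserve $Q$ up to the $x_1^2x_2^2$ term's sign. This gives a group of order at least $8$ acting, and the lines fall into orbits.

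\textbf{Lines skew to the base line.} Such a line is parametrized as $x_1=ax_3+bx_4$, $x_2=cx_3+dx_4$. Substituting into $F_t$ gives a binary quintic in $(x_3,x_4)$ whose six coefficients must vanish. The terms $x_3x_4(x_4^3-tx_3^3)$ are fixed, so the $x_3^5$ and $x_4^5$ coefficients give $Q(a,c)=0$ and $Q(b,d)=0$, up to corrections from the middle term. Using these together with the symmetry group, I will reduce to finitely many cases and solve, e.g.\ via a Gröbner basis in $\Q(t)[a,b,c,d]$. A key point is checking that the number of solutions is independent of $t$. I expect the system to factor into components where $t$ enters only through cube roots, so the count is constant for $t\ne0,1$.

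\textbf{Smoothness.} The partial derivatives of $F_t$ are computed by restricting to the plane sections above. A singular point lies in some fibre $\pi^{-1}(\lambda)$ and is a singular point of that plane quartic also satisfying $\partial F/\partial x_3$-type conditions. A resultant computation in $\lambda$ shows that singularities occur only for $t\in\{0,1\}$.

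\textbf{Main obstacle.} The main obstacle is proving exactness of the count $85$, i.e.\ that no further lines exist, uniformly in $t$. For this I will run the elimination with $t$ symbolic and verify that the resulting zero-dimensional scheme has degree $85$ over $\Q(t)$ and is reduced. I will then check that specialization preserves the count for all $t\ne0,1$ by examining the discriminant locus of the elimination ideal. Here the assumption $t\neq0,1$ must appear exactly as the bad locus.
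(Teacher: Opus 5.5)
Your setup and lower bound are correct, but the proposal never proves the upper bound, which is what ``exactly $85$'' requires. In the plane $x_4=\lambda x_3$ the residual quartic is $B_\lambda(x_1,x_2)+\lambda(\lambda^3-t)x_3^4$, with $B_\lambda=(1-\lambda)(x_1^4-3x_2^4)-6(1+\lambda)x_1^2x_2^2$, and the five planes $x_3=0$, $x_4=0$, $x_4=\lambda x_3$ with $\lambda^3=t$ do give $20$ lines meeting $L_1=V(x_3,x_4)$. You leave open, and even anticipate, further planes where $B_\lambda$ ``splits into conics or lines''. None exist, and the missing argument is short. Let $c=\lambda(\lambda^3-t)\neq0$. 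A line of $V(B_\lambda+cx_3^4)$ cannot be $\alpha x_1+\beta x_2=0$, since the restriction $B_\lambda(\beta,-\alpha)s^4+cx_3^4$ is not identically zero. So the line is $x_3=\ell(x_1,x_2)$, which forces $B_\lambda=-c\,\ell^4$. Since $B_\lambda$ has no $x_1^3x_2$ or $x_1x_2^3$ terms, $\ell$ is a multiple of $x_1$ or $x_2$. That forces $\lambda=-1$, and $2(x_1^4-3x_2^4)$ is not a fourth power. A factorization of $B_\lambda$ into quadrics yields no line at all. You also need the four lines in each special plane to be distinct. The discriminant of $B_\lambda$ as a quadratic in $x_1^2/x_2^2$ is $48(\lambda^2+\lambda+1)$, so together with $\lambda\neq1$ this is exactly where $t\neq1$ is used.

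For lines skew to $L_1$ you give no argument for the number $64$. ``Gr\"obner basis'' plus the expectation that $t$ enters through cube roots is not a proof, and that expectation is wrong. The paper's reduction runs as follows:
\begin{itemize}
\item The coefficient equations force $a_{11}a_{12}a_{21}a_{22}\neq0$, using $t\neq0$. So one may write $A=\lambda\begin{pmatrix} ru & v\\ u & 1\end{pmatrix}$.
\item The $x_3^5$ and $x_4^5$ coefficients involve only $r,v$ and give $16$ pairs.
\item The $x_3^3x_4^2$ and $x_3^2x_4^3$ coefficients are linear in $u$ up to monomial factors, and are compatible for only $4$ of these pairs.
\item For each surviving pair, eliminating $\lambda^4$ from the last two equations gives a degree-$4$ relation in $u$. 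For two of the pairs it is $t=u^3\bigl(6+2\sqrt3+(3+2\sqrt3)u\bigr)/\bigl(3-2\sqrt3+(6-2\sqrt3)u\bigr)$, and its Galois conjugate for the other two. A choice of fourth root then fixes $\lambda$, giving $16$ lines per pair.
\end{itemize}
The hypothesis $t\neq0,1$ enters because this degree-$4$ map $u\mapsto t$ is branched only over $0,1,\infty$.

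Your fallback, a symbolic elimination of ``degree $85$ over $\Q(t)$'' followed by a discriminant check, is a legitimate computer-assisted strategy in principle. As written, though, it is neither executed nor complete. Your chart only sees the $64$ skew lines; the other $21$ lie in other Pl\"ucker charts. Specialization of a Gr\"obner basis over $\Q(t)$ can also fail where leading coefficients vanish or where solutions leave the chart. So ``the bad locus is exactly $\{0,1\}$'' is the claim to be proved, not a routine check.

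The smoothness step has the same status, since the resultant is never computed. Note also that points of $L_1\cap R_\lambda$ are singular on the plane section $L_1\cup R_\lambda$ whether or not $X_t$ is singular there. So the fibrewise test must treat $L_1$ separately, as the paper does before its case split on $x_1x_2$ using Euler's identity.
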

        
   The proof is done by direct calculations showing that there are precisely $20$ (resp. $64$) lines meeting with (resp. skew to) the line $V(x_3, x_4)\subset X_{t}$.  Let $X$ be a smooth surface in $\P^3$ of degree $d$. The line configurations also lead us to study the N\'eron--Severi group ${\rm NS}(X)$, an important invariant of $X$. The behavior of the Picard number $\rho(X)$ (i.e., the rank of ${\rm NS}(X)$) for $d\ge 5$ is still far from being well understood. For $d=5$, it is unknown whether the upper bound $\rho(X)\le h^{1,1}(X)=45$ by the Hodge number is sharp (see \cite[Page 288]{Per82}, \cite[Page 112]{Bea14}). To the best of our knowledge, the largest previously known Picard number for a smooth quintic surface was $41$, attained by the Barth quintic surface (\cite{RS14}). One member of the above pencil establishes a new record for the Picard number of a smooth quintic surface.

  \begin{theorem}\label{thm:main-ns}
    	The smooth quintic surface $X_{-1/8}$ in Theorem \ref{thm:mainA} has Picard number $43$. 
    \end{theorem}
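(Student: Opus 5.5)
We need to show that $\rho(X_{-1/8})\ge 43$ and that $\rho(X_{-1/8})\le 43$.

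\emph{Lower bound.} Using the 85 lines of Theorem~\ref{thm:mainA}, together with a few extra curves special to $t=-1/8$ (four conics are suggested by the abstract), we compute their intersection matrix. Each line $L$ on a quintic satisfies
\[
L^2 = -2 - K_X\cdot L = -2 - 1 = -3,
\]
since $K_X = H$. Two distinct lines meet with intersection $1$ if they are coplanar and $0$ otherwise. A conic $C$ has $C^2 = -2 - 2 = -4$. Its intersections with the lines come from the explicit equations. A computer calculation of the rank of this Gram matrix should give $43$. I expect the lines alone to give rank $41$ or $42$, with the conics at $t=-1/8$ adding the rest.

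\emph{Upper bound.} Since $h^{1,1}=45$, we must exclude ranks $44$ and $45$. I plan to use reduction modulo primes. Choose a prime $p$ of good reduction for $X_{-1/8}$, for instance $p\ne 2,3,5$ with $X$ smooth mod $p$. Compute the characteristic polynomial of Frobenius on $H^2_{\text{ét}}$ from point counts $\#X(\mathbb F_{p^k})$. By the Weil conjectures,
\[
\#X(\mathbb F_q) = 1 + q^2 + \operatorname{tr}(F\mid H^2),
\]
because $b_1=0$ and $b_2=53$. Then apply the Tate bound: $\rho(X)\le\rho(X_{\bar{\mathbb F}_p})\le$ the number of eigenvalues $\lambda$ with $\lambda/p$ a root of unity. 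For surfaces this bound has the same parity as $b_2=53$, so it is odd. Hence a single prime can give at best $\rho\le 43$, which is exactly what we need.

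Obtaining the full characteristic polynomial of degree $53$ requires many point counts. We can reduce this work:
\begin{itemize}
\item Factor out the known algebraic part: Frobenius acts on the span of the lines and conics by permutations twisted by $p$, and its characteristic polynomial can be read off from the fields of definition of the curves.
\item Count points only for the $10$-dimensional transcendental-plus-remaining part.
\item Use the functional equation to halve the number of counts needed.
\end{itemize}
The aim is a prime $p$ for which the remaining factor has no cyclotomic factor after normalization.

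\emph{Fallback.} If no such prime exists, for instance because supersingular-type behaviour forces extra Tate classes, I would use Artin--Tate or the Elsenhans--Jahnel method with two primes. That method compares the discriminants of the Néron--Severi lattices mod $p$ to rule out $\rho=45$ in characteristic~$0$. For $\rho=44$, parity with $b_2=53$ is not an obstruction in characteristic $0$ itself, so I need a prime where the bound is exactly $43$, or two primes with bound $45$ whose discriminant square classes are incompatible.

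\emph{Main obstacle.} The main obstacle is computational: counting points over $\mathbb F_{p^k}$ for large enough $k$. The symmetries of the pencil should help, e.g.\ the action $x_1\mapsto ix_1$ combined with sign changes, which decompose cohomology into eigenspaces. Choosing $p\equiv 1 \pmod 8$ should split these eigenspaces and reduce the count needed in each.
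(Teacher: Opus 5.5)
\textbf{Verdict.} Your plan is sound, and it takes a genuinely different route from the paper for the upper bound. Two points need attention: the lower bound is incomplete as written, and your fallback cannot exclude $\rho=44$.

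\textbf{Lower bound.} Your approach matches the paper's: $85$ lines plus four conics, with a Gram matrix of rank $43$. However, the lines alone have rank $39$, not $41$ or $42$. So the extra curves must supply four independent classes, and your proposal never actually produces them. The paper writes down an explicit smooth conic $C_1$ (a plane section of a quadric) and uses its orbit $C_j=\sigma^{j-1}(C_1)$ under $\sigma=[A_1A_2]$. Until such curves are exhibited, $\rho\ge 43$ is not proved.

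\textbf{Upper bound: comparison of routes.} The paper never computes Frobenius on $X$. It uses $g=\mathrm{diag}(i,-i,1,1)$, essentially the symmetry you mention, but not to split Frobenius eigenspaces. Instead it applies Shioda's inequality
$\rho(X)\le 45+2r-b_2(Y)+\rho(Y)$, where $r=\dim H^0(K_X)^{\langle g\rangle}=2$ and $Y$ is the resolved quotient, an elliptic surface with $b_2(Y)=34$. This controls the entire non-invariant part of $H^2$ by Hodge theory alone. Only $\rho(Y)\le 28$ is then needed, and that is cheap: the fibres $5I_0^*+I_4+2I_1$ give a trivial lattice of rank $25$, and the $L$-function bounds the Mordell--Weil rank over $\overline{\mathbb F}_{13}(t)$ by $3$.

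Your van Luijk-type approach is more direct and needs no well-chosen quotient. It is much heavier, though. You need the degree-$10$ factor of the Frobenius polynomial of a quintic surface, which requires traces of $F^k$ up to $k=5$ or $6$, and you must track the Galois action on all $89$ curves.

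\textbf{Choice of prime.} Nothing in your argument guarantees a prime with bound exactly $43$, and the choice of prime is genuinely constrained:
\begin{itemize}
\item Since $\rho=43$, Shioda's inequality is an equality. So the transcendental part not invariant under $\langle g\rangle$ has Hodge numbers $(2,0,2)$.
\item Under $\mathrm{diag}(\pm1,\pm1,1,1)$ this part splits into two Frobenius-stable planes, on each of which $g$ has eigenvalues $\pm i$.
\item For $p\equiv 3\pmod 4$, Frobenius conjugates $g$ to $g^{-1}$. It therefore has trace $0$ on each plane, so its characteristic polynomial there is $x^2\pm p^2$.
\item Hence every such prime gives a bound of at least $47$.
\end{itemize}
Your choice $p\equiv 1\pmod 8$ avoids this obstruction, but only the computation can confirm that the bound is actually $43$.

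\textbf{Fallback.} Using two primes with bound $45$ and incompatible discriminants can only exclude $\rho=45$, never $\rho=44$. So your argument genuinely hinges on finding a prime whose bound is exactly $43$.
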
 
   The intersection matrix of the lines on $X_{-1/8}$ has rank $39$. By searching for low degree curves, we find four explicit smooth conics whose classes together with those of the lines generate a sublattice $M$ in ${\rm NS}(X_{-1/8})$ of rank $43$, which gives a lower bound $\rho(X_{-1/8})\ge 43$. Inspired by \cite{RS14}, we prove the upper bound $\rho(X_{-1/8})\le 43$ by looking at a suitable finite quotient of $X_{-1/8}$ and controlling the Picard number of the minimal resolution via good reduction modulo $p$.   More precisely, we find a quotient of $X_{-1/8}$ by a cyclic group of order $4$ and its minimal resolution $Y$ is a relatively minimal elliptic surface. By a result of Shioda \cite[Proposition 5]{Shi86}, one gets an upper bound for $\rho(X_{-1/8})$ in terms of invariants of $Y$ (see \eqref{eq:boundrho}). Combining this with an upper bound for $\rho(Y)$ from good reduction of $Y$ at $p=13$, we get the upper bound $\rho(X_{-1/8})\le 43$.
      
   It is important to understand the full structure of the N\'eron--Severi group. Once the Picard number is known, a natural question is to find explicit generators of the N\'eron--Severi group. This in general requires nontrivial work, even for smooth surfaces in $\P^3$ (see e.g., \cite{AS83}, \cite{BS08}, \cite{SSvL10}, \cite{RS14}, \cite{Deg15}). Based on the geometry of $X$ and the arithmetic nature of ${\rm NS}(X)$ (Propositions \ref{prop:voisin-bound} and \ref{prop:linediff}), we introduce the notion of a {\it $d$-saturated lattice} (Definition \ref{def:d-saturated}) and develop a criterion for a finite-index sublattice of ${\rm NS}(X)$ containing the classes of all lines on $X$ to coincide with ${\rm NS}(X)$ (Corollary \ref{cor:ns-generation}). Using this new criterion, we obtain the following 
 \begin{theorem}\label{thm:main-basis}
    	The N\'eron--Severi group ${\rm NS}(X_{-1/8})$ is generated by the classes of the lines and four conics.
    \end{theorem}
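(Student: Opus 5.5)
Let $M\subset \NS(X_{-1/8})$ be the sublattice generated by the classes of the $85$ lines and the four conics. By Theorem \ref{thm:main-ns}, $\rho(X_{-1/8})=43=\Rank M$, so $M$ has finite index in $\NS(X_{-1/8})$. The task is to show that this index is $1$. The plan is to compute the discriminant of $M$ and limit the primes that can divide the index $[\NS(X_{-1/8}):M]$. Then, for each surviving prime, we rule out an overlattice using the criterion of Corollary \ref{cor:ns-generation}, which is built on the notion of a $d$-saturated lattice (Definition \ref{def:d-saturated}) with $d=5$.

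Concretely, I would proceed in the following steps.

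First, write down explicit equations for the $85$ lines and the four conics. Compute their intersection matrix: lines have self-intersection $2-d=-3$ and conics have self-intersection $-6$ on a quintic, by adjunction with $K=\mathcal O(1)$. Extract a $\Z$-basis of $M$ by a Hermite or LLL reduction, and compute $\det M$.

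Since $[\NS:M]^2$ divides $\det M$, only primes $p$ with $p^2\mid \det M$ can divide the index. The index could also be bounded a priori by Proposition \ref{prop:voisin-bound}, which presumably constrains the discriminant group of $\NS(X)$ for a smooth surface in $\P^3$, for instance through the primitivity or unimodularity properties coming from $H^2(X,\Z)$.

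For each candidate prime $p$, an overlattice of index $p$ corresponds to a nonzero isotropic vector $v\in M^\vee/M$ of order $p$ satisfying integrality conditions. Here one uses the geometric constraints packaged in $5$-saturation:
\begin{itemize}
\item the hyperplane class $h$ must satisfy $h\cdot x\equiv x^2 \pmod 2$ for every $x$, since $K_X=h$;
\item by Proposition \ref{prop:linediff}, differences of lines, or of classes of lines and $h$, must satisfy specific divisibility conditions.
\end{itemize}
The criterion of Corollary \ref{cor:ns-generation} should reduce to a finite check: $M$ equals $\NS$ if $M$ is $5$-saturated and contains all line classes. $5$-saturation in turn amounts to verifying that no element of $(M\otimes\Q)\setminus M$ of the relevant order satisfies those conditions. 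Computationally, enumerate the $p$-torsion subgroup of $M^\vee/M$ and check each isotropic class against these conditions, perhaps adding the even/characteristic-vector test.

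The main obstacle is the last step when the discriminant group has a large $p$-part, especially $p=2$ or $p=5$, where many isotropic vectors may survive the purely lattice-theoretic tests. There I would rely on the specific content of $5$-saturation. If a hypothetical overlattice vector were represented by an effective divisor of low degree, say a line or a curve of small degree, then it would already have been found. So one would try to show that any glue vector can be shifted by elements of $M$ to a class $x$ with $x\cdot h$ small and $x^2$ forcing an effective curve of low degree, contradicting the fact that we know all lines.

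If this fails for some prime, the fallback is to search for further low-degree curves, such as conics or twisted cubics, on $X_{-1/8}$ to enlarge $M$ and recompute.
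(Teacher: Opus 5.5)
Your outer shell matches the paper: take $M$ generated by the $85$ lines and four conics, use $\rho=43=\Rank M$, restrict to primes $p$ with $p^2$ dividing the discriminant, and invoke Corollary \ref{cor:ns-generation}. But the step that actually rules out overlattices is misidentified, so as written the argument does not close.

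$5$-saturation is not a property of $M$, and it is not about parity or ``divisibility conditions''. $M$ is odd (lines have square $-3$) and indefinite. The relevant lattice is the even negative definite $N:=M\cap h^\perp_{\NS}$. What must be verified is the following. For every isotropic subgroup $H\subseteq N^\vee/N$ of prime order $p$ with $p^2\mid\operatorname{disc}N$, the overlattice $N_H$ must contain a vector $x\notin N$ with $x^2\ge -6$. Here $\operatorname{disc}M=2^{18}\cdot 3^7$, so $p\in\{2,3\}$ and $p=5$ never arises. The paper checks this condition as $n_6(N_H)>n_6(N)$ by short-vector enumeration (Lemma \ref{lem:saturation-criterion}).

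This is where Propositions \ref{prop:voisin-bound} and \ref{prop:linediff} enter. If $\NS\neq M$, then $h^\perp_{\NS}$ would contain some such $N_H$, and hence $x$. Voisin's bound forces $x^2=-6$, and Proposition \ref{prop:linediff} then forces $x=[L_1]-[L_2]\in N$, a contradiction. A line $L$ with $(h,[L])=1$ then gives $\NS=\Z[L]+h^\perp_{\NS}=M$.

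Your proposed substitutes do not do this job:
\begin{itemize}
\item The condition $(h,x)\equiv x^2 \pmod 2$ is just evenness of $h^\perp$. It is already encoded in $H$ being isotropic and excludes nothing further.
\item Proposition \ref{prop:voisin-bound} bounds the norms of vectors in $h^\perp_{\NS}$. It does not constrain the discriminant group. Since $\NS$ is not unimodular, unimodularity of $H^2(X,\Z)$ gives you nothing directly.
\item ``Shift a glue vector to a low-degree effective class'' is not yet an argument. You would have to say which classes are forced to be effective and why they must come from lines. That is exactly the content of Proposition \ref{prop:linediff}, applied to a vector of square $\ge -6$ in $h^\perp$.
\end{itemize}
You concede that isotropic candidates may survive your lattice-theoretic tests. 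The missing idea is that you never need to exclude them arithmetically; you only need to exhibit one short vector in each candidate overlattice. Finally, correct your Gram matrix: by adjunction a smooth conic on a quintic has $C^2=-2-(K_X,C)=-4$, not $-6$.
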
    
In fact, we show that the orthogonal complement $N$ of the hyperplane class $h$ in $M$ is $5$-saturated, which implies $M={\rm NS}(X_{-1/8})$ by Corollary \ref{cor:ns-generation}. The criterion is applicable to other surfaces. For example, Rams--Sch\"utt found a sublattice of finite index in the N\'eron--Severi group of the Barth quintic surface and proved that this index is $2^i$ for some $0\leq i\leq4$ (\cite[Proposition~5.2]{RS14}). Applying Corollary \ref{cor:ns-generation}, one can conclude that this index is $1$ (Remark \ref{rmk:Barth}).

We found the pencil in Theorem \ref{thm:mainA} by considering lines on smooth quintic surfaces $X$ with a large automorphism group ${\rm Aut}(X)$ (see Remark \ref{rem:aut}). Following the same approach, we found further examples of surfaces in $\P^3$ with many lines. Miyaoka \cite[Section~2.2]{Miy84} proved that a smooth surface of degree $d\geq 4$ in $\P^3$ contains at most $2d(d-2)$ pairwise skew lines. For $d=5$ (resp. $10$), this gives an upper bound of $30$ (resp. $160$). To the best of our knowledge, the previous records for the number of pairwise skew lines in degrees $5$ and $10$ were $19$ and $84$, respectively, while the previous record for the total number of lines in degree $10$ was $300$ (see \cite{Ram02}, \cite{Ram05}, \cite{BS07}, \cite{FLR19}).

We construct examples of smooth quintic surfaces with exactly $79$ lines, including $24$ pairwise skew lines (Theorem \ref{thm:79-members}). In degree $10$, we also construct a smooth surface with exactly $356$ lines, including $112$ pairwise skew lines (Theorem~\ref{thm:356-lines}).

 	\subsection*{Acknowledgements}
	This work is partially supported by the National Natural Science Foundation of China (No. 12071337).
	\section{$d$-saturated lattices}
	In this section, we introduce the notion of a $d$-saturated lattice, which is motivated by certain properties of the divisor classes orthogonal to the hyperplane class on smooth surfaces in $\P^3$ (Propositions \ref{prop:voisin-bound} and \ref{prop:linediff}). Then we develop a criterion for a finite-index sublattice of ${\rm NS}(X)$ containing the classes of all lines on $X$ to coincide with ${\rm NS}(X)$ (Corollary \ref{cor:ns-generation}).
	
	We recall some basics on lattices which we will use in our paper  (see e.g. \cite{Nik80} for more details).
	A lattice $(S,(*,**))$ is a free $\Z$-module of finite rank endowed with a $\Z$-valued symmetric bilinear form $(*,**) = (*,**)_S$. We write $x^2=(x,x)$ and $S_k=S\otimes_{\Z}k$ for a field $k$. The lattice $S$ is {\it even} if $x^2\in2\Z$ for every $x\in S$, and {\it positive definite} (resp. {\it negative definite}) if the induced form on $S_{\R}$ is positive definite (resp. negative definite).
	
	Let $(v_1,\ldots,v_m)$ be a $\Z$-basis of $S$. The {\it Gram matrix} of $S$ with respect to $(v_1,\ldots,v_m)$ is $((v_i,v_j))_{1\le i,j\le m}$. The determinant of this matrix is independent of the chosen basis and is denoted by $\det S$. We call $S$ {\it nondegenerate} if $\det S\ne0$, and define its \emph{discriminant} by $\operatorname{disc}S:=|\det S|$.
	
	Let $S$ be a nondegenerate lattice. A sublattice $S'\subseteq S$ is {\it primitive} if $S'_{\Q}\cap S=S'$, equivalently, if $S/S'$ is torsion-free. If $S$ is a sublattice of a lattice $\bar S$ of the same rank, then we say $\bar S$ is an {\it overlattice} of $S$. Note that the index $[\bar S: S]$ is equal to the order of the finite abelian group $\bar{S}/S$, and $[\bar S:S]^2=\operatorname{disc}S/\operatorname{disc}\bar S$. We may view the dual $S^\vee:={\rm Hom}(S,\Z)$ of $S$ as a subset of $S_\Q$ via the canonical embedding $S\hookrightarrow S^\vee$ determined by the bilinear form of $S$. The {\it discriminant group} of $S$ is the finite abelian group $S^\vee/S$ of order $\operatorname{disc}S$. If $S$ is even, its {\it discriminant form} is
	$$q_S:S^\vee/S\longrightarrow\Q/2\Z, \qquad q_S(x+S)\equiv x^2\bmod2\Z.$$
	A subgroup $H\subseteq S^\vee/S$ is {\it isotropic} if $q_S|_H=0$. In this case, $S_H$ denotes the inverse
image of $H$ in $S^\vee$. For an element $x\in S$, we use $x^\perp_S$ to denote the orthogonal complement of $x$ in $S$. For a negative definite lattice $M$ and a positive integer $b$, we define
$$n_b(M):=\#\{v\in M\mid 0<-v^2\leq b\}.$$

\medskip
	
	Let $X\subset\P^3$ be a smooth surface of degree $d$. We denote by $h$ the hyperplane class of $X$. We often use $h_{\NS}^\perp$ to denote $h_{{\NS}(X)}^\perp$ if there is no confusion. The {\it line lattice} $S_X$ of $X$ is the sublattice
	$$S_X:=\bigl\langle [L]\mid L\subset X{\rm\; is \; a \; line}\bigr\rangle\subseteq\NS(X)$$
 generated by the classes of all lines on $X$.
	\begin{definition}\label{def:d-saturated}
	    Let $d\geq 5$ be a positive integer and let $N$ be an even negative definite lattice. We say that $N$ is {\it $d$-saturated} if every nontrivial even overlattice $N\subsetneq{\bar N}$ contains a vector $x\in{\bar N}\setminus N$ satisfying $x^2\geq -2(d-2)$.
	\end{definition}
	This definition is motivated by Propositions \ref{prop:voisin-bound} and \ref{prop:linediff}.

    The lattice $h_{\NS}^\perp$ is even and negative definite by
	Riemann--Roch and the Hodge index theorem.
	Feyzbakhsh--Thomas proved the following bound for $d\geq10$
	using wall-crossing \cite[Theorem~1.1(B)]{FT21}.
	In the appendix to their paper, Voisin proved
	$D^2\leq-2d+5$ for smooth surfaces in $\P^3$ of degree $d\geq4$
	\cite[Theorem~A.1]{FT21}; evenness gives the formulation below.
	
	\begin{proposition}[Feyzbakhsh--Thomas, Voisin]
		\label{prop:voisin-bound}
		Let $X\subset\P^3$ be a smooth surface of degree $d\geq4$,
		with hyperplane class $h$. Every nonzero
		$D\in h_{\NS}^\perp$ satisfies $D^2\leq-2(d-2)$.
	\end{proposition}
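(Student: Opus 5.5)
This is essentially a citation, so the proof is a reduction from Voisin's inequality $D^2\le -2d+5$ (\cite[Theorem~A.1]{FT21}), valid for every nonzero $D\in h_{\NS}^\perp$ when $d\ge4$, combined with parity. I do not reprove Voisin's theorem; I take it as the input and only check that the sharpened even form follows.

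First, I record that $h_{\NS}^\perp$ is an even, negative definite lattice.

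\emph{Negative definiteness.} The Hodge index theorem says the intersection form on $\NS(X)_\R$ has signature $(1,\rho-1)$. Since $h^2=d>0$, the form is negative definite on $h^\perp$. So every nonzero $D\in h_{\NS}^\perp$ has $D^2<0$.

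\emph{Evenness.} By adjunction, $K_X=(d-4)h$. Riemann--Roch, in the form of the Wu formula, gives $D^2\equiv D\cdot K_X \pmod 2$ for every divisor $D$. For $D\perp h$ this yields $D^2\equiv (d-4)\,D\cdot h=0\pmod 2$, so $D^2$ is even.

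Next, I combine the two facts. Let $D\in h_{\NS}^\perp$ be nonzero. Voisin gives $D^2\le -2d+5$, and $D^2$ is an even integer. The largest even integer that is at most $-2d+5$ is $-2d+4=-2(d-2)$. Hence $D^2\le-2(d-2)$.

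For $d\ge10$ the same conclusion is also \cite[Theorem~1.1(B)]{FT21}, but Voisin's bound already covers all $d\ge4$, so nothing more is needed.

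The only substantive step is Voisin's inequality itself, which I am taking from the appendix. Its idea is roughly this: for a line bundle $L=\mathcal O_X(D)$ with $D\cdot h=0$, one uses vanishing and generation properties of the twists $L(k)$ together with the restriction $H^0(\mathcal O_{\P^3}(k))\to H^0(\mathcal O_X(k))$ to force $-D^2$ to be large. If one wanted to reprove it, that part would be the real obstacle. The parity sharpening from $-2d+5$ to $-2d+4$ is routine.
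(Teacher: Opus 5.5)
Your proposal is correct and matches the paper's argument. The paper likewise notes that $h_{\NS}^\perp$ is even and negative definite by Riemann--Roch and the Hodge index theorem. It then takes Voisin's bound $D^2\leq -2d+5$ from the appendix of Feyzbakhsh--Thomas, valid for $d\geq 4$, and uses evenness to sharpen it to $D^2\leq -2(d-2)$, exactly as you do.
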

	
	Feyzbakhsh--Thomas also observed that differences of classes
	of skew lines attain this bound \cite[Section~1]{FT21}.
	For quintic surfaces, the converse follows from \cite[Lemma~2.1]{Ram02}. We give a uniform proof for all $d\geq5$.
	
	\begin{proposition}\label{prop:linediff}
		Let $X\subset\P^3$ be a smooth surface of degree $d\geq5$,
		with hyperplane class $h$. A class $D\in h_{\NS}^\perp$
		satisfies $D^2=-2(d-2)$ if and only if
		$D=[L_1]-[L_2]$ for two skew lines $L_1,L_2\subset X$.
		The ordered pair $(L_1,L_2)$ is uniquely determined by $D$.
	\end{proposition}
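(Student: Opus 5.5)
The plan is to prove the easy implication by adjunction, get uniqueness by intersecting with a line, and reduce the converse to finding an effective curve in $|h+D|$ and reading off its structure.

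\emph{Easy direction.} By adjunction $K_X=(d-4)h$, so a line $L\subset X$ satisfies $L^2+K_X\cdot L=-2$, i.e. $L^2=2-d$. For skew lines $L_1,L_2$ we have $L_1\cdot L_2=0$. Hence $D=[L_1]-[L_2]$ satisfies $h\cdot D=1-1=0$ and $D^2=2(2-d)=-2(d-2)$.

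\emph{Uniqueness.} Suppose $[L_1]-[L_2]=[L_3]-[L_4]$ with both pairs skew, and intersect with $[L_1]$. This gives
$$2-d=L_1\cdot L_3-L_1\cdot L_4.$$
If $L_3\neq L_1$, then $L_1\cdot L_3\in\{0,1\}$. The equation then forces $L_1\cdot L_4\geq d-2\geq 3$, so $L_4=L_1$. In that case $2[L_1]=[L_2]+[L_3]$. Intersecting with $L_1$ gives $2(2-d)=L_1\cdot L_2+L_1\cdot L_3\geq 0$ when $L_2,L_3\neq L_1$, which is a contradiction. If instead $L_2=L_1$ or $L_3=L_1$, we get $D=0$, also a contradiction. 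So $L_3=L_1$, and then $L_4=L_2$.

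\emph{Hard direction.} Let $D\in h^\perp_{\NS}$ with $D^2=-2(d-2)$. I first aim to show that $h+D$ is effective. Riemann--Roch gives
$$\chi(\mathcal O_X(h+D))=\chi(\mathcal O_X)+\tfrac12\bigl((h+D)^2-K_X\cdot(h+D)\bigr)=\chi(\mathcal O_X(h))-(d-2),$$
using $D^2=-2(d-2)$ and $h\cdot D=0$. Serre duality gives $h^2(h+D)=h^0((d-5)h-D)$. I would bound this from above by comparing with $h^0((d-5)h)$ and $h^0((d-5)h\pm D)$, using the symmetry $D\leftrightarrow -D$ (both satisfy the hypothesis). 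The goal is to prove that $h^0(h+D)>0$ or $h^0(h-D)>0$; after possibly replacing $D$ by $-D$, I assume $h+D$ is effective. For $d=5$ this is immediate: $h^2(h+D)=h^0(-D)=0$, and the computation gives $h^0(h+D)\geq 2$. For general $d$, controlling $h^0((d-5)h-D)$ is the main obstacle. There I would try to mimic Voisin's argument from the appendix of \cite{FT21}, which proves $D^2\leq -2d+5$, and track the equality case.

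\emph{Structure of $C$.} Take $C\in|h+D|$. Then $C\cdot h=d$, $C^2=4-d$, and $p_a(C)=\tfrac12(d-1)(d-4)+1$. This is $d-2$ less than the genus of a plane curve of degree $d$. I would decompose $C$ into components and use Proposition \ref{prop:voisin-bound} on differences of classes of equal degree. The aim is to show that $C$ contains a line $L_1$ with $C-L_1\equiv h-L_2$ for a line $L_2$, i.e. $C-L_1$ is the residual of $L_2$ in a plane section. Then $D=[L_1]-[L_2]$, and the computation $D^2=-2(d-2)$ forces $L_1\cdot L_2=0$, so the lines are skew. The key numerical point is that a component of degree $e$ with very negative self-intersection can only be a line. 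A line is the only curve on $X$ with $h\cdot C'=1$, and it has $C'^2=2-d$. I would use this to pin down the decomposition via the Hodge index theorem applied to $C-eh/d$-type classes.
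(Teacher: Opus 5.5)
\textbf{There is a genuine gap in the structural step of the converse.} Your easy direction, the Riemann--Roch computation, and the $d=5$ effectiveness argument are fine. But you work with a single member $C\in|h+D|$ and rely on the claim that a component of degree $e$ with very negative self-intersection must be a line. That claim is false: by adjunction a smooth rational curve of degree $e$ on $X$ has square $-2-e(d-4)$, so a conic already has square $6-2d<2-d$. Proposition~\ref{prop:voisin-bound} and the Hodge index theorem only give upper bounds on squares, so they cannot tell you which components occur.

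The missing idea is to exploit that $|h+D|$ moves. For this you need $h^0(h+D)\ge2$, not just effectiveness. Write $|h+D|=B+|P|$ with fixed part $B$ and moving part $|P|$.
\begin{itemize}
\item Since $(h+D)^2=4-d<0\le P^2$, we get $B\ne0$ and $\deg P\le d-1$.
\item An integral component $T$ of degree $e$ of a general member of $|P|$ moves, so $T^2\ge0$. Adjunction and the degree--genus bound $p_a(T)\le(e-1)(e-2)/2$ for integral curves in $\P^3$ give $T^2\le e(e+1-d)$. Hence $e\ge d-1$.
\item So a general member of $|P|$ is integral of degree $d-1$, with $P^2=0$ and maximal genus. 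The pencil is then base-point-free, so this member is smooth, and by Castelnuovo it is planar. It is therefore residual to a line $L_2$ in a plane section.
\item $B$ has degree $1$, so it is a line $L_1$, and $D=[L_1]-[L_2]$.
\end{itemize}
A single curve $C$ carries no information about which part moves, which is why your numerical decomposition cannot close.

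\textbf{The effectiveness step for $d\ge6$ is only a plan, but it can be completed.} Voisin's restriction estimates on a very general plane section give $h^0(ih-D)-h^0((i-1)h-D)\le\binom{i+2}{2}-1$ for $1\le i\le d-4$. Summing, $h^0((d-5)h-D)\le\binom{d-2}{3}-d+4$. Your computation with $\chi(\mathcal O_X(h))=4+\binom{d-2}{3}$ then gives $h^0(h+D)\ge2$ for all $d\ge5$, with no sign swap needed. The paper instead sums the same estimates up to $i=d-4$ and compares with the Riemann--Roch lower bound $h^0((d-4)h-D)\ge p_g-d+3$. Equality is forced throughout, which yields $h^0(h-D)=2$.

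\textbf{A small error in your uniqueness argument.} $L_1\cdot L_4\ge3$ does not force $L_4=L_1$, since that would give $L_1\cdot L_4=2-d<0$. It is impossible outright, which already disposes of the case $L_3\ne L_1$. Your conclusion is correct and agrees with the paper's observation that $(D,[L])\in\{-1,0,1\}$ for every line $L\ne L_1,L_2$.
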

	
	\begin{proof}
		Let $D\in h_{\NS}^\perp$ with $D^2=-2(d-2)$. Put $p_g=\binom{d-1}{3}$ and
		$f_i=h^0(X,\mathcal O_X(ih-D))$. Ampleness gives $f_0=0$.
		The restriction estimates in Voisin's proof
		\cite[Proof of Theorem~A.1]{FT21}, applied to a very general
		plane section, give
		$$
		f_i-f_{i-1}\leq\binom{i+2}{2}-1,
		\qquad 1\leq i\leq d-4.
		$$
		Riemann--Roch and Serre duality therefore imply
		$$
		p_g-d+3\leq f_{d-4}
		\leq\sum_{i=1}^{d-4}\left(\binom{i+2}{2}-1\right)
		=p_g-d+3.
		$$
		All the increment inequalities are equalities, so $f_1=2$.
		
		Write $|h-D|=B+|P|$, with fixed divisor $B$ and moving
		pencil $|P|$. Since $(h-D)^2=4-d<0$ and $P^2\geq0$,
		we have $B\ne0$ and $(P,h)\leq d-1$.
		An integral component $T$ of a general member of $|P|$
		moves in an algebraic family, so $[T]^2\geq0$.
		Writing $e=\deg T$, adjunction and the degree--genus bound $p_a(T)\leq (e-1)(e-2)/2$ (see \cite[Theorem~3.1 and Proposition~3.4]{GP78}) give
		$$
		0\leq[T]^2=2p_a(T)-2-(d-4)e\leq e(e+1-d).
		$$
		Thus $e\geq d-1$. It follows that a general member of $|P|$
		is reduced and integral of degree $d-1$, with $P^2=0$ and
		genus $(d-2)(d-3)/2$.
		The pencil is base-point-free, so its general member is smooth
		by Bertini and planar by Castelnuovo's bound
		\cite[IV, Theorem~6.4]{Har77}.
		Its residual divisor in the plane section is a line $L_1$,
		and $B$ is a line $L_2$. Hence
		$h-D=[L_2]+h-[L_1]$, giving $D=[L_1]-[L_2]$.
		Since $D^2=-2(d-2)$ and $[L_i]^2=2-d$, the lines are skew.
		
		Finally, $(D,[L_1])=2-d$ and $(D,[L_2])=d-2$, whereas
		$(D,[L])\in\{-1,0,1\}$ for every other line $L$.
		Since $d\geq5$, these values determine the ordered pair
		uniquely.
\end{proof}
\begin{remark}
For $d=3$, the ordered pair of skew lines is not necessarily uniquely determined by $D$. For $d=4$, the class $D$ need not be a difference
 of line classes. For example, the Fermat quartic $X=V(\sum_{j=1}^4x_j^4)$ contains the smooth elliptic quartic
 $$E=V(x_1^2+x_2^2-i\sqrt2x_3^2,\,x_1^2-x_2^2-i\sqrt2x_4^2).$$
 The class $D=[E]-h$ satisfies $D\cdot h=0$ and $D^2=-4$.
 If $D=[L_1]-[L_2]$ for two skew lines, then
 $[E]\cdot[L_1]=(h+[L_1]-[L_2])\cdot[L_1]=1-2=-1$,
 which is impossible since $E$ and $L_1$ are distinct
 irreducible curves.
\end{remark}

	\begin{corollary}\label{cor:ns-generation}
		Let $X\subset\P^3$ be a smooth surface of degree $d\geq 5$, with hyperplane class $h$.  Suppose that $X$ contains at least one line. Let $C_1,\ldots,C_r\subset X$ be curves, and let
		$$M:=\bigl\langle [C_1],\dots,[C_r]\bigr\rangle\subseteq {\rm NS}(X),\qquad N:=M\cap h_{\NS}^\perp.$$
		Suppose that the line lattice $S_X\subseteq M$ and $\Rank\, M=\rho(X)$. If $N$ is $d$-saturated, then $M=\NS(X)$.
	\end{corollary}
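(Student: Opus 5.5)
The plan is to compare $M$ with $\NS(X)$ through the orthogonal complements of $h$. Set $\bar N:=h_{\NS}^\perp$. Since $\Rank M=\rho(X)$ and $h\in M$ (because $h=[L]+[\text{residual}]$ is not automatically in $M$, we argue more carefully below), $N=M\cap\bar N$ is a finite-index sublattice of $\bar N$. Indeed, $M_\Q=\NS(X)_\Q$, so $N_\Q=h^\perp_{\NS(X)_\Q}=\bar N_\Q$. Moreover, $\bar N$ is even and negative definite by Riemann--Roch and Hodge index, and $N\subseteq\bar N$ is a sublattice of the same rank. So $\bar N$ is an even overlattice of $N$.

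First I show $N=\bar N$. If not, $d$-saturatedness gives $x\in\bar N\setminus N$ with $x^2\ge -2(d-2)$. Since $x\ne 0$ (as $0\in N$), Proposition~\ref{prop:voisin-bound} forces $x^2=-2(d-2)$. Proposition~\ref{prop:linediff} then gives $x=[L_1]-[L_2]$ for skew lines $L_1,L_2\subset X$. But $S_X\subseteq M$, so $x\in M\cap\bar N=N$, a contradiction. Hence $N=h^\perp_{\NS}$.

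Next I pass from $N$ to $M$. Take a line $L\subset X$, which exists by hypothesis, so $[L]\in S_X\subseteq M$. For any $D\in\NS(X)$, let $k=(D,h)\in\Z$. Then $(D-k[L],h)=k-k(L,h)=0$, since $(L,h)=1$. So $D-k[L]\in h_{\NS}^\perp=N\subseteq M$, and therefore $D\in M$. This gives $M=\NS(X)$. Note that this step does not actually need $h\in M$; it only uses $[L]\cdot h=1$.

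The only delicate point is the finite-index claim in the first paragraph. It follows from the rank hypothesis, since $M_\Q=\NS(X)_\Q$ implies $(M\cap\bar N)_\Q=\bar N_\Q$: any $v\in\bar N$ has a multiple $mv\in M$, and $mv$ is still orthogonal to $h$. The rest is a direct application of Propositions~\ref{prop:voisin-bound} and~\ref{prop:linediff}, so I expect no real obstacle.
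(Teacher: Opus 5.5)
Your proof is correct and follows the paper's argument exactly: you use $d$-saturation together with Propositions~\ref{prop:voisin-bound} and~\ref{prop:linediff} to get $N=h_{\NS}^\perp$, and then you subtract $(D,h)[L]$ for a line $L$ to conclude $M=\NS(X)$. You should delete the garbled parenthetical about $h\in M$ in your first paragraph, since, as you yourself note, neither step uses $h\in M$.
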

	\begin{proof}
		Since $\Rank\, M=\rho(X)$, the lattice $h_{\NS}^\perp$ is an even overlattice of $N$. Since $N$ is $d$-saturated, if $N\subsetneq h_{\NS}^\perp$, there exists $D\in h_{\NS}^\perp\setminus N$ satisfying $D^2\geq -2(d-2)$. By Propositions \ref{prop:voisin-bound} and \ref{prop:linediff}, equality holds and $D=[L_1]-[L_2]$ for two skew lines $L_1,L_2\subset X$. Since $S_X\subseteq M$, we have $D\in N$, a contradiction. Hence $N=h_{\NS}^\perp$.
		
		Choose a line $L\subset X$. Since $[L]\in M$ and $(h, [L])=1$, every $D'\in\NS(X)$ satisfies $$D'-(D',h)[L]\in h_{\NS}^\perp=N\subseteq M.$$ Thus $D'\in M$, and $M=\NS(X)$.
	\end{proof}
	
\begin{lemma}\label{lem:saturation-criterion}
	Let $d\geq5$ be an integer and $N$ an even negative definite
	lattice. 
	Then $N$ is $d$-saturated if and only if
	$$
	n_{2(d-2)}(N_H)>n_{2(d-2)}(N)
	$$
	for every isotropic subgroup $H\subseteq N^\vee/N$ of prime
	order $p$ satisfying $p^2\mid\operatorname{disc}N$.
\end{lemma}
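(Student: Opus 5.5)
We prove the two implications separately, using the standard correspondence (Nikulin) between even overlattices $N\subseteq\bar N\subseteq N^\vee$ and isotropic subgroups $H=\bar N/N\subseteq N^\vee/N$, under which $\bar N=N_H$. Since $N_H\supseteq N$, we always have $n_{2(d-2)}(N_H)\ge n_{2(d-2)}(N)$. Strict inequality holds exactly when $N_H$ contains a vector $x\notin N$ with $0<-x^2\le 2(d-2)$, that is, $x^2\ge-2(d-2)$ with $x\ne0$; here negative definiteness gives $x^2<0$ for all $x\neq 0$. So the inequality for a given $H$ says precisely that the overlattice $N_H$ satisfies the condition in Definition \ref{def:d-saturated}.

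For the forward direction, assume $N$ is $d$-saturated. Take any isotropic $H$ of prime order $p$. Then $N_H$ is a nontrivial even overlattice, and evenness follows from isotropy because $q_N|_H=0$ and $N_H$ is integral. By definition $N_H$ contains $x\in N_H\setminus N$ with $x^2\ge-2(d-2)$. Since $x\neq0$, this gives the strict inequality. The hypothesis $p^2\mid\operatorname{disc}N$ plays no role in this direction.

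For the converse, let $N\subsetneq\bar N$ be any nontrivial even overlattice, and write $H'=\bar N/N$, a nontrivial isotropic subgroup. Choose an element of $H'$ of prime order $p$ and let $H\subseteq H'$ be the cyclic subgroup it generates. Then $H$ is isotropic, being a subgroup of an isotropic group, and $N_H\subseteq \bar N$. We also need $p^2\mid\operatorname{disc}N$. This follows from $[N_H:N]^2=\operatorname{disc}N/\operatorname{disc}N_H$ with $[N_H:N]=p$ and $\operatorname{disc}N_H\in\Z$, which forces $p^2\mid\operatorname{disc}N$. The hypothesis therefore applies and gives $x\in N_H\setminus N\subseteq\bar N\setminus N$ with $x^2\ge-2(d-2)$. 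Hence $N$ is $d$-saturated.

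The only step needing care is the overlattice/isotropic-subgroup dictionary, in particular that an isotropic $H$ yields an even integral $N_H$. For $x,y\in N_H$, integrality of $(x,y)$ follows by polarization: $2(x,y)=(x+y)^2-x^2-y^2\in2\Z$, since $q_N$ vanishes on $H$ and $x+y\in N_H$. Evenness is immediate from $q_N|_H=0$. Conversely, any even overlattice lies in $N^\vee$ and has isotropic image. These facts are standard from \cite{Nik80}, so I expect no real obstacle. The main content is the reduction to prime-order subgroups together with the divisibility observation.
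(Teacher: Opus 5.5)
Your proof is correct and follows the same approach as the paper: Nikulin's correspondence between even overlattices and isotropic subgroups, reduction to a prime-order isotropic subgroup of $\bar N/N$ with $p^2\mid\operatorname{disc}N$ coming from the index formula, and the observation that strict inequality of $n_{2(d-2)}$ is equivalent to a short vector in $N_H\setminus N$. You also spell out details the paper leaves implicit, namely Cauchy's theorem, the divisibility argument, and the integrality and evenness of $N_H$.
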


\begin{proof}
	By \cite[Proposition~1.4.1]{Nik80}, even overlattices of $N$ correspond bijectively to isotropic subgroups $H'\subseteq N^\vee/N$, with $[N_{H'}:N]=|H'|$.
	Every nontrivial even overlattice contains an intermediate even overlattice of prime index $p$ satisfying $p^2 |\operatorname{disc}N$.
	Therefore, it suffices to consider the subgroups in the lemma.
	Since $N\subseteq N_H$, the inequality holds if and only if $N_H\setminus N$ has a vector of square at least $-2(d-2)$. 
\end{proof}

In our applications, the degree $d=5$, and we often use the function $\texttt{qfminim}$ in PARI/GP (\cite{Th}) to determine $n_6(N)$.

	\section{A pencil of quintic surfaces with 85 lines}
	In this section, we prove Theorem~\ref{thm:mainA} by determining all lines on the smooth members of the pencil $X_t$.
	We also give explicit equations for the $85$ lines on $X_{-1/8}$.
	
	\subsection{The pencil and its smooth members}
	Consider the quintic surfaces $X_t\subset \P^3$ defined by
	\begin{equation}\label{eq:85-pencil}
			F_t=(x_3-x_4)(x_1^4-3x_2^4)-6(x_3+x_4)x_1^2x_2^2+x_3x_4(x_4^3-tx_3^3),
	\end{equation}
	where $t\in\C$.
		\begin{lemma}\label{lem:85-smoothness}
		The surface $X_t$ is smooth if and only if $t\notin\{0,1\}$.
	\end{lemma}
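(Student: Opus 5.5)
Write $F_t=(x_3-x_4)A-6(x_3+x_4)Q+x_3x_4G_t$, where $A=x_1^4-3x_2^4$, $Q=x_1^2x_2^2$ and $G_t=x_4^3-tx_3^3$. The plan is to solve the system $\partial F_t/\partial x_j=0$ ($j=1,\dots,4$) by hand. The structure of $F_t$ suggests splitting into two cases: either $(x_1,x_2)=(0,0)$, or the $2\times2$ system in $x_1,x_2$ has a nontrivial solution.

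\textbf{Step 1: the locus $x_1=x_2=0$.} Here $F_t=x_3x_4G_t$. The partials in $x_1,x_2$ vanish automatically, since they are at least cubic in $x_1,x_2$. So a singular point on this line is exactly a point of the line $V(x_1,x_2)$ where the binary quintic $x_3x_4(x_4^3-tx_3^3)$ has a multiple root. For $t\neq0$ its five roots $x_3=0$, $x_4=0$ and the three roots of $x_4^3=tx_3^3$ are distinct, so there is no singular point. For $t=0$ the form is $x_3x_4^4$, and $[0:0:1:0]$ is a singular point. This already gives one direction for $t=0$.

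\textbf{Step 2: the partials in $x_1,x_2$.} We have
\[
\partial_1F_t=4x_1\bigl((x_3-x_4)x_1^2-3(x_3+x_4)x_2^2\bigr),\qquad \partial_2F_t=-12x_2\bigl((x_3-x_4)x_2^2+(x_3+x_4)x_1^2\bigr).
\]
Put $a=x_3-x_4$ and $b=x_3+x_4$. If $x_1=0$ and $x_2\neq0$, we need $ax_2^2=0$, so $x_3=x_4$. The same conclusion holds if $x_2=0$ and $x_1\neq0$. If both $x_1,x_2\neq0$, we get the linear system $ax_1^2=3bx_2^2$, $ax_2^2=-bx_1^2$ in the unknowns $x_1^2,x_2^2$. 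Its determinant is $a^2+3b^2$, so a nonzero solution forces $a^2+3b^2=0$, that is $a=\pm\sqrt{-3}\,b$. This splits into finitely many cases in the ratio $x_3:x_4$.

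\textbf{Step 3: closing each case.} In each case we impose $F_t=0$ together with $\partial_3F_t=\partial_4F_t=0$. By the Euler relation, $F_t=0$ is implied by the other equations, so the real content is $\partial_3F_t=\partial_4F_t=0$. Writing $\partial_3F_t=A-6Q+x_4G_t-3tx_3^3x_4$ and $\partial_4F_t=-A-6Q+x_3G_t+3x_3x_4^3$, the cases go as follows.

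\emph{Case $x_3=x_4=s$, $x_1=0$.} Then $G_t=s^3(1-t)$. Adding the two partials gives $-12Q+2s^4(1-t)+3s^4(1-t)=0$ with $Q=0$. Hence $t=1$ or $s=0$. If $s=0$, the remaining equation is $-3x_2^4=0$, which contradicts $x_2\neq0$. The case $x_2=0$ is analogous. So for $t=1$ we should obtain a singular point such as $[0:x_2:1:1]$, with $x_2^4$ determined by subtracting the two equations. I will check that this value is consistent; otherwise I will look in the other branches for the singular point at $t=1$.

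\emph{Case $a=\pm\sqrt{-3}\,b$.} Here $x_1^2/x_2^2$ is determined, so $A$ and $Q$ become multiples of $x_2^4$, and $x_3:x_4$ is a fixed cube-root-of-unity-type ratio. The two partials then become two equations in $x_2^4$ and $x_4^4$. Eliminating $x_2^4$ leaves a condition of the form $c(t)\,x_4^4=0$. I expect $c(t)$ to be nonzero except possibly at $t\in\{0,1\}$; this can be verified by a direct, or computer-algebra, computation.

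The main obstacle is this last elimination: confirming that the ratio $x_3:x_4$ with $(x_3-x_4)^2=-3(x_3+x_4)^2$ gives no singular points for $t\notin\{0,1\}$. If the computation becomes messy, I will fall back on a Gröbner basis or resultant computation in $t$ to certify that the discriminant locus of the pencil is exactly $\{0,1\}$.
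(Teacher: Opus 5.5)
Your case split is the same as the paper's: the line $V(x_1,x_2)$; exactly one of $x_1,x_2$ zero, which forces $x_3=x_4$; and $x_1x_2\neq0$, which forces $a^2+3b^2=4(x_3^2+x_3x_4+x_4^2)=0$. Steps 1--2 and the case $x_3=x_4$ are correct. At $t=1$ your candidate is indeed consistent: $(0:x_2:1:1)$ with $x_2^4=-1$ is a singular point.

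\textbf{The gap.} The decisive case of the ``if'' direction is $x_1x_2\neq0$, $x_3^2+x_3x_4+x_4^2=0$. There you only say you \emph{expect} $c(t)$ to vanish just for $t\in\{0,1\}$, and you defer to an elimination or Gr\"obner computation. That step is the heart of the proof and it is asserted, not proved.

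\textbf{The missing idea.} When you declared $F_t=0$ redundant, you threw away the equation that settles this case. The term $(x_3-x_4)A-6(x_3+x_4)Q$ is quartic in $(x_1,x_2)$, and $x_3x_4G_t$ does not involve $x_1,x_2$. Hence
\[
x_1\partial_1F_t+x_2\partial_2F_t=4\bigl(F_t-x_3x_4G_t\bigr),
\]
so at any singular point $x_3x_4G_t=0$. In this case $x_3/x_4$ is a primitive cube root of unity, so $x_3x_4\neq0$ and $x_3^3=x_4^3$. Then $G_t=(1-t)x_4^3$, which forces $t=1$. This is exactly how the paper concludes.

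Your elimination would also succeed. On this locus, with $x_3=\omega x_4$, one has $\omega\,\partial_3F_t+\partial_4F_t=(x_3\partial_3F_t+x_4\partial_4F_t)/x_4=5\omega(1-t)x_4^4$. So $c(t)$ vanishes only at $t=1$, but this has to be carried out, not anticipated.

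\textbf{A smaller omission.} The condition $a^2+3b^2=0$ also allows $a=b=0$, i.e.\ points of $L_1=V(x_3,x_4)$ with $x_1x_2\neq0$. There $x_1^2/x_2^2$ is not determined and the ratio $x_3:x_4$ is undefined, so your final case does not cover them. They are excluded at once: at such points $\partial_3F_t+\partial_4F_t=-12x_1^2x_2^2\neq0$. This is why the paper first disposes of both lines $V(x_3,x_4)$ and $V(x_1,x_2)$ before assuming $(x_3,x_4)\neq(0,0)$.
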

	\begin{proof}
		For $t=0,1$, clearly $X_t$ is singular. From now on, we may assume that $t\neq 0,1$. By computing the partial derivatives $\frac{\partial F_{t}}{\partial{x_i}}$, we infer that the lines $L_1:=V(x_3,x_4)$ and $V(x_1,x_2)$ in $\mathbb{P}^3$ contain no singular points of $X_t$.
		
		Now, suppose $(x_1:x_2:x_3:x_4)$ is a singular point of $X_t$ with at least one of $x_3$ and $x_4$ being nonzero. If exactly one of $x_1, x_2$ is zero, then the partial derivative with respect to the nonzero coordinate being zero gives $x_3=x_4$, which implies that $x_3 x_4\neq 0$. This together with $F_t=0$ gives $t=1$, a contradiction. If $x_1x_2\neq0$, the equations $\frac{\partial F_{t}}{\partial{x_i}}=0$ ($i=1,2$) give $(x_3-x_4)x_1^2=3(x_3+x_4)x_2^2$ and $(x_3+x_4)x_1^2=-(x_3-x_4)x_2^2$. Thus $x_1^4=-3x_2^4$, $x_3^2+x_3x_4+x_4^2=0$. Using Euler's identity, we have 
$$0=x_1 \frac{\partial F_{t}}{\partial{x_1}}+x_2 \frac{\partial F_{t}}{\partial{x_2}}=4(F_t- x_3x_4(x_4^3-tx_3^3)),$$ which implies $x_3x_4(x_4^3-tx_3^3)=0$ since $F_t=0$. By $(x_3, x_4)\neq (0,0)$ and $x_3^2+x_3x_4+x_4^2=0$, we infer that $t=1$, a contradiction.
	\end{proof}
	\subsection{The 85 lines}
	The line $L_1:=V(x_3,x_4)\subset \mathbb{P}^3$ is contained in the quintic surface $X_t$, where $t\in \mathbb{C}\setminus\{0,1\}$. Consider 
	$$\mathcal{I}_t:=\{L\,|\, L\neq L_1 \text{ is a line on }X_t \text{ meeting }L_1\}$$	and 
$$\mathcal{S}_t:=\{L\,|\, L \text{ is a line on }X_t \text{ skew to }L_1\}.$$
	\begin{proof}[Proof of Theorem \ref{thm:mainA}]
		 Smoothness of $X_t$ follows from Lemma \ref{lem:85-smoothness}.
		 Each line in $\mathcal{I}_t$ is contained in exactly one of the planes $P_{(s_1:s_2)}=V(s_1 x_3-s_2 x_4)\subset \mathbb{P}^3$, where $(s_1:s_2)\in \mathbb{P}^1$. Clearly the intersection $P_{(1:0)}\cap X_{t}=V(x_3, x_4(x_1^4-3 x_2^4+6 x_1^2 x_2^2))\subset \mathbb{P}^3$ consists of $5$ distinct lines, which gives $4$ members in $\mathcal{I}_t$. Let $\lambda\in\mathbb{C}$. The plane $P_{(\lambda:1)}$ contains a line in  $\mathcal{I}_t$ if and only if the residual equation 
$$(1-\lambda)(x_1^4-3x_2^4)-6(1+\lambda)x_1^2 x_2^2+\lambda (\lambda^3-t)x_3^4=0$$
has a linear factor.	If $\lambda (\lambda^3-t)\neq 0$, the existence of such a factor implies  $$(1-\lambda)(x_1^4-3x_2^4)-6(1+\lambda)x_1^2 x_2^2=(a x_1 +b x_2)^4$$ for some $a,b\in \mathbb{C}$, which is impossible. If $\lambda (\lambda^3-t)=0$, then $P_{(\lambda:1)}\cap X_{t}$ contains $5$ distinct lines. This gives $4\cdot 4=16$ lines in $\mathcal{I}_t$. Thus totally $\mathcal{I}_t$ has $20$ lines.

For a matrix $A=\begin{pmatrix}
a_{11} & a_{12} \\
a_{21} & a_{22}
\end{pmatrix}
$, we define a line $$L_A:=\{(a_{11}x_3+a_{12}x_4:a_{21}x_3+a_{22}x_4:x_3:x_4)|\, (x_3:x_4)\in \mathbb{P}^1\}\subset \mathbb{P}^3.$$
Note that this gives a bijection between the set ${\rm M}(2,\mathbb{C})$ of $2$ by $2$ matrices and the lines in $\mathbb{P}^3$ skew to $L_1$. Suppose $L_A\in \mathcal{S}_t$. Then the polynomial $$F_{t, A}:=F_t(a_{11}x_3+a_{12}x_4,a_{21}x_3+a_{22}x_4,x_3, x_4)\in \mathbb{C}[x_3, x_4]$$ is zero. Then the coefficients of the monomials $x_3^{5-i} x_4^{i}$ ($0\le i\le 5$) in this polynomial must be zero.
 From this and $t\neq 0$, we get $a_{11} a_{12}a_{21}a_{22}\neq 0$. To simplify computation, we may assume $A=\lambda \begin{pmatrix}
r u & v \\
u & 1
\end{pmatrix}
$, where $\lambda, r, u, v\in \mathbb{C}^*$. Then by direct computation, the conditions of the coefficients of $x_3^{5-i} x_4^{i}$ ($0\le i\le 5$) in $F_{t, A}$ being zero are precisely the equations 
\begin{equation}\label{eq1}
\lambda^4 u^4 (-3 - 6 r^2 + r^4)=0,
\end{equation}
\begin{equation}\label{eq2}
-t - \lambda^4 u^3 (12 + 12 r^2 - 3 u + 6 r^2 u + r^4 u + 12 r v - 4 r^3 v)=0,
\end{equation}
\begin{equation}\label{eq3}
-2 \lambda^4 u^2 (9 + 3 r^2 - 6 u + 6 r^2 u + 12 r v + 6 r u v + 2 r^3 u v + 3 v^2 - 
 3 r^2 v^2)=0,
\end{equation}
\begin{equation}\label{eq4}
-2 \lambda^4 u (6 - 9 u + 3 r^2 u + 6 r v + 12 r u v + 6 v^2 + 3 u v^2 + 
 3 r^2 u v^2 - 2 r v^3)=0,
\end{equation}
\begin{equation}\label{eq5}
1 + \lambda^4 (-3 + 12 u - 12 r u v - 6 v^2 - 12 u v^2 - 4 r u v^3 + v^4)=0,
\end{equation}
\begin{equation}\label{eq6}
-\lambda^4 (-3 + 6 v^2 + v^4)=0.
\end{equation}

The two equations \eqref{eq1}, \eqref{eq6} give $16$ pairs of $(r, v)$, and exactly $4$ pairs among them lead non-empty solution for the system of the two equations \eqref{eq3} and \eqref{eq4} (note that the long factors in \eqref{eq3}, \eqref{eq4} have degree one in variable $u$). These $4$ pairs of $(r_j,v_j)$ ($1\le j\le 4$) are  
$$( a, -b),\, (-a, b),\, ( b i , -a i ), \, ( -b i , a i ),$$
where $a=\sqrt{3+2\sqrt{3}}, b=\sqrt{-3+2\sqrt{3}}$.
On the other hand, by direct substitution using $(r_j,v_j)$, the equations \eqref{eq1}, \eqref{eq3}, \eqref{eq4}, \eqref{eq6} become identically zero, and \eqref{eq2}, \eqref{eq5} become $$-t - 12 \lambda^4 u^3 (6 + 2 \sqrt{3} + 3 u + 2 \sqrt{3} u)=0,\,\, 1 + 
 12 \lambda^4 (3 - 2 \sqrt{3} +6 u -2 \sqrt{3}u)=0, \text{ for }j=1,2,$$
 $$-t + 12 \lambda^4 u^3 (-6 + 2 \sqrt{3} - 3 u + 2 \sqrt{3} u)=0,\,\, 
 1 + 12 \lambda^4 (3 + 2 \sqrt{3} + 6u + 2\sqrt{3} u)=0, \text{ for }j=3,4.$$

Thus, for each pair $(r_j,v_j)$, clearly there are exactly $16$ solutions for $(\lambda, u)$. This implies that $\mathcal{S}_t$ contains exactly $4\cdot 16=64$ lines. Therefore, the number of the lines on $X_t$ is $1+20+64=85$. \end{proof}

We label the $85$ lines on $X_{-1/8}$ as follows, starting with $L_1=V(x_3,x_4)$. Throughout, $j_1,j_2,j_3\in\{0,1\}$, $k\in\{0,1,2,3\}$, and $i=\sqrt{-1}$.
The $20$ lines meeting $L_1$ are
\begingroup
\setlength{\jot}{3pt}
$$
\begin{aligned}
	L_{2+4j_1+2j_2+j_3}:\quad&
	x_{4-j_1}=0,\qquad
	\bigl(\sqrt3-(-1)^{j_1+j_2}\bigr)x_1
	=(-1)^{j_1+j_3}i^{j_2}\sqrt[4]{12}\,x_2;
	\\[3pt]
	L_{10+2j_2+j_1}:\quad&
	2x_4=-x_3,\qquad
	x_1=(-1)^{j_1}i(i\sqrt3)^{j_2}x_2;
	\\[3pt]
	L_{14+2j_1+j_3}:\quad&
	4x_4=(1-i\sqrt3)x_3,\\
	&\bigl(\sqrt[4]{12}+(-1)^{j_3}(1-i)\bigr)x_1
	=(-1)^{j_1}
	\bigl((-1)^{j_3}\sqrt[4]{12}-(1+i)\sqrt3\bigr)x_2;
	\\[3pt]
	L_{18+2j_1+j_3}:\quad&
	4x_4=(1+i\sqrt3)x_3,\\
	&\bigl(\sqrt[4]{12}-(-1)^{j_1+j_3}(1+i)\bigr)x_1
	=-\bigl((-1)^{j_3}\sqrt[4]{12}
	+(-1)^{j_1}(1-i)\sqrt3\bigr)x_2.
\end{aligned}
$$
The $64$ lines skew to $L_1$ form two families of $32$ lines:
$$
\begin{aligned}
	&L_{22+16j_1+40j_2+4j_3+k}:\\[1pt]
	&\quad\left\{
	\begin{aligned}
		&2\sqrt[4]{18}
		\bigl((-1)^{j_1}x_1-(-1)^{j_2+j_3}ix_2\bigr)
		=i^{k+(1-j_2)(1-j_3)}(i-1)(x_3+2x_4),\\[3pt]
		&4\bigl((-1)^{j_1}(\sqrt3+(-1)^{j_2})x_1
		+i^{j_2}\sqrt[4]{12}\,x_2\bigr)\\
		&\qquad =
		i^{k+(1-j_2)(1-j_3)}(i-1)\sqrt[4]{6}
		\bigl((-1)^{j_2}\sqrt[4]{12}
		+(-1)^{j_3}i^{1-j_2}(\sqrt3-(-1)^{j_2})\bigr)x_3;
	\end{aligned}
	\right.
	\\[7pt]
	&L_{30+16j_1+24j_2+4j_3+k}:\\[1pt]
	&\quad\left\{
	\begin{aligned}
		&2(-1)^{j_1+j_3}\sqrt3\,x_1+6(-1)^{j_2}x_2
		=i^{k+j_2(1-j_3)}\sqrt[4]{6}\,(x_3+2x_4),\\[3pt]
		&(-1)^{j_1}
		\bigl((-i)^{j_2}\sqrt[4]{12}
		+(-1)^{j_3}(1+(-1)^{j_2}\sqrt3)\bigr)x_1\\
		&\qquad+
		\bigl((-1)^{j_3}(-i)^{j_2}\sqrt[4]{12}
		+3-(-1)^{j_2}\sqrt3\bigr)x_2
		=i^{k+j_2(3-j_3)}\sqrt[4]{6}\,x_3.
	\end{aligned}
	\right.
\end{aligned}
$$
\endgroup

By computation with the help of computer, the intersection matrix of the $85$ lines has rank $39$, which implies that  the line lattice $S_{X_{-1/8}}$ has rank $39$.
	
	\begin{remark}\label{rem:aut}
	We denote the primitive $k$-th root $e^{2\pi i/k}$ of unity by $\xi_k$. Consider the following 3 matrices in $\GL(4,\C)$
\begin{equation*}\label{eq:85-aut-genes}
	A_1=\begin{pmatrix}
		-\xi_8/\sqrt2&3^{1/4}/\sqrt2&0&0\\
		-1/(3^{1/4}\sqrt2)&-\xi_8^{7}/\sqrt2&0&0\\
		0&0&\xi_3&0\\
		0&0&0&\xi_3^2
	\end{pmatrix},\; 
	A_2=\begin{pmatrix}
		-1&0&0&0\\
		0&1&0&0\\
		0&0&1&0\\
		0&0&0&1
	\end{pmatrix},\;
	A_3=\begin{pmatrix}
		\xi_8^3&0&0&0\\
		0&\xi_8&0&0\\
		0&0&0&1\\
		0&0&1&0
	\end{pmatrix}.
\end{equation*}
Let $G_{48}$ (resp. $G_{96}$) be the subgroups in ${\rm PGL}(4,\C)\cong {\rm Aut}(\P^3)$ generated by  $A_1, A_2$ (resp. $A_1,A_2,A_3$). Then  $
	G_{48}\cong\SL(2,3)\rtimes C_2$, 
	$G_{96}\cong(\SL(2,3)\rtimes C_2)\rtimes C_2$. Clearly $G_{48}\subseteq {\rm Aut}(X_t)$ for any $t\in \C\setminus\{0,1\}$ and $G_{96}\subseteq {\rm Aut}(X_{-1})$.
The smooth quintic threefolds $\widehat{X}_t$ in $\P^4$ defined by $F_t+x_5^5$ admit a faithful action of $G_{48}\times C_5$. Note that $\widehat{X}_{-1}$ is isomorphic to the smooth quintic threefold in \cite[Example 2.1 (17)]{OY19}. By the classification result \cite[Theorem~2.2]{OY19} and computations similar to its proof, one can show that $\Aut(X_t)=G_{48}$ for $t\notin\{0,1,-1\}$, and $\Aut(X_{-1})=G_{96}$. We will not use these two equalities in the sequel and we omit the proof.
\end{remark}

\section{Generators of N\'eron–Severi groups of large rank}

In this section, we show that $X_{-1/8}$ has Picard number $43$, a new record for smooth quintic surfaces. Then using $5$-saturated lattices, we show that the N\'eron–Severi group ${\rm NS}(X_{-1/8})$ is generated by the classes of the lines and four conics.

Let $X=X_{-1/8}$ be the smooth quintic surface defined by $F_{-1/8}$ as in \eqref{eq:85-pencil}. Then $\sigma:=[A_1A_2]\in {\rm Aut}(X)$, where $A_1, A_2$ are as in Remark \ref{rem:aut}. Consider the smooth conic $C_1\subset\P^3$ defined by
\begin{equation}\label{eq:85-conic}
	\left\{
	\begin{aligned}
		(1-i)\sqrt[4]{24}\bigl(\sqrt[4]{12}-\sqrt3+1\bigr)x_2
		&=\bigl(\sqrt[4]{12}+\sqrt3-1\bigr)x_3+4x_4,\\[4pt]
		4i\sqrt2\bigl(x_1^2+(\sqrt[4]{108}-3)x_2^2\bigr)
		&=(\sqrt3-1)x_3^2
		+2\bigl(\sqrt3-1-\sqrt[4]{108}\bigr)x_4^2.
	\end{aligned}
	\right.
\end{equation}
Let
\begin{equation}\label{eq:85-four-conics}
	C_j:=\sigma^{j-1}(C_1),\qquad j=2,3,4.
\end{equation}
Clearly the four conics $C_j$ are on $X$. Recall that $X$ contains exactly $85$ lines $L_1,\dots,L_{85}$ .

\begin{proof}[Proof of Theorem \ref{thm:main-ns}]
By computation with the help of computer, the intersection matrix for the $89$ curves $L_1,\dots,L_{85}$, $C_1, \dots, C_4$ is of rank $43$. This implies $\rho(X)\ge 43$.

Let $G\subset {\rm Aut}(X)$ be a subgroup. Let $r$ denote the dimension of the space $H^0(X, K_X)^G$ of $G$-invariant holomorphic $2$-forms on $X$. Suppose $Y$ is a smooth projective surface birational to the quotient $X/G$. Using \cite[Proposition 5]{Shi86},  we infer the following inequality
\begin{equation}\label{eq:boundrho}
\rho(X)\le 45+2 r-b_2(Y)+\rho(Y),
\end{equation}
where $b_2(Y)$ is the second Betti number of $Y$. 

Choose $G=\langle g\rangle$, where $g=[{\rm diag}(i,-i,1,1)]$. Then $G$ is a cyclic group of order $4$. By computation of the action of $G$ on $H^0(X,K_X)\cong H^0(X,\mathcal{O}_X(1))$, we get $r=2$. The residual quartics in the planes containing the line $L_1=V(x_3,x_4)\subset \mathbb{P}^3$ give a fibration $\varphi: X\rightarrow \mathbb{P}^1$. Clearly each fiber of $\varphi$ is preserved by $G$. By direct computation, $g$ (resp. $g^2$) has no fixed points (exactly $4$ fixed points) on each smooth fiber $C$ of $\varphi$, and the quotient $C/G$ has genus one by Riemann--Hurwitz formula. Note that the smooth quintic curve $Z:=V(x_1)\cap X$ is preserved by $G$, and $Z$ meets a general fiber of $\varphi$ at $4$ points which form a single $G$-orbit. Thus $Z/G\cong \mathbb{P}^1$ gives a section of the induced fibration $\overline{\varphi}: X/G\rightarrow \mathbb{P}^1$. Then the minimal resolution $Y$ of $X/G$ is an elliptic surface with a zero section induced by $Z/G$. Converting to Weierstrass form, we infer that $Y$ is the relatively minimal elliptic surface given by 
$$y^2=x^3+12 t(t+1)(t^3+8) x^2 - 12 t^2 (t-1)^2 (t^3+8)^2 x.$$
By computation of the discriminant, the types of singular fibers are $5 I_0^*+I_4+2 I_1$ in Kodaira's notation. Then the Euler number $e(Y)$ of $Y$ is $5\cdot 6+4+2\cdot 1=36$, and $b_2(Y)=e(Y)-2=34$.

The surface $Y$ has good reduction at $p=13$. The rank of the Mordell--Weil lattice of the elliptic surface $Y\otimes \overline{\mathbb{F}}_{13}$ is at most $3$ (for example, one can verify this using the function \texttt{AnalyticInformation} of Magma (\cite{BCP})). Since the types of singular fibers for $Y\otimes \overline{\mathbb{F}}_{13}$ are the same as those for $Y$, we have $\rho(Y\otimes \overline{\mathbb{F}}_{13})\le 25 + 3=28$ by Shioda--Tate formula. Then $\rho(Y)\le 28$ since $\rho(Y)\le \rho(Y\otimes \overline{\mathbb{F}}_{13})$. Applying the inequality \eqref{eq:boundrho}, we conclude the theorem. 
\end{proof}
By checking the $d$-saturation of a sublattice in ${\rm NS}(X_{-1/8})$ orthogonal to $h$, we obtain an explicit set of generators of the group ${\rm NS}(X_{-1/8})$.
\begin{proof}[Proof of Theorem \ref{thm:main-basis}]
Recall that the line lattice $S_X$ has rank $39$. We define the lattice
$$M:=\bigl\langle S_X, [C_1],[C_2],[C_3],[C_4]\bigr\rangle\subseteq {\rm NS}(X).$$	Let $N:=M\cap h_{\rm NS}^\perp$. By computation, $\Rank (M)=43$, ${\rm disc}\, M=2^{18}\cdot3^7$, and the discriminant group
	\[
	N^\vee/N\cong
	(\Z/2\Z)^3\oplus\Z/6\Z\oplus(\Z/12\Z)^4
	\oplus\Z/24\Z\oplus\Z/120\Z.
	\]
	Then the primes satisfying
	$p^2\mid\operatorname{disc}N$ are $2$ and $3$. For each isotropic subgroup  $H\subset N^\vee/N$ with $|H|=2,3$, we confirm that
	$n_6(N_H)>n_6(N)$ using PARI/GP (\cite{Th}).
	Thus by Lemma~\ref{lem:saturation-criterion}, $N$ is $5$-saturated.
	Since $S_X\subseteq M$ and $\Rank (M)=\rho(X)$ by Theorem \ref{thm:main-ns},
	Corollary~\ref{cor:ns-generation} implies $M={\rm NS}(X)$.
\end{proof}

\begin{remark}
Some other smooth members in the pencil in Theorem \ref{thm:mainA} also have Picard number $43$ by similar arguments to the case $t=-1/8$. On the other hand, $X_{-1}$ has Picard number $39$ and its N\'eron--Severi group is generated by the classes of lines by applying Corollary~\ref{cor:ns-generation}.
\end{remark}

\begin{remark}\label{rmk:Barth}
The Barth quintic surface $X_B$ contains $75$ lines, attaining the previous record for smooth complex quintic surfaces. Rams--Sch\"utt \cite[Theorem 2.2]{RS14} proved that $X_B$ has Picard number $41$. Moreover, they derived a sublattice $M'\subseteq {\rm NS}(X_B)$ containing the line lattice $S_{X_B}$ with index $[{\rm NS}(X_B): M']=2^i$ for some $i\in \{0,\dots,4\}$ (\cite[Proposition 5.2]{RS14}). Similar to the proof of Theorem \ref{thm:main-basis}, one can verify that $M'\cap h_{{\rm NS}(X_B)}^\perp$ is $5$-saturated. Therefore, Corollary~\ref{cor:ns-generation} implies ${\rm NS}(X_B)=M'$. 
\end{remark}

\section{Further examples of surfaces with many lines}
In this section, we construct further examples of smooth surfaces with many lines in degrees $5$ and $10$. The quintic examples contain exactly $79$ lines and admit configurations of $21$ and $24$ pairwise skew lines. The example of degree $10$ contains exactly $356$ lines, including $112$ pairwise skew lines.

\subsection{Quintic surfaces with $79$ lines}
Consider the pencil $Y_t=V(P_t)\subset\P^3$, $t\in\C$, where
\begin{equation*}
	P_t=x_1^4x_2+x_1x_2^4+x_3^4x_4+x_3x_4^4+t(x_1x_2(x_3^3+x_4^3)+x_3x_4(x_1^3+x_2^3)).
\end{equation*}

\begin{theorem}\label{thm:79-members}
	The surfaces $Y_2$, $Y_{-3+\sqrt{13}}$, and $Y_{-3-\sqrt{13}}$
	are smooth and each contains exactly $79$ lines.
	Moreover, $Y_2$ contains $21$ pairwise skew lines, and each of
	$Y_{-3+\sqrt{13}}$ and $Y_{-3-\sqrt{13}}$ contains $24$ pairwise
	skew lines.
\end{theorem}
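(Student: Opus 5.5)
The plan mirrors the proof of Theorem~\ref{thm:mainA}. We have three tasks: establish smoothness, count the lines, and exhibit the skew configurations.

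\emph{Smoothness.} We will compute the partial derivatives $\partial P_t/\partial x_i$ for the three values $t\in\{2,-3\pm\sqrt{13}\}$ and show that the Jacobian ideal together with $P_t$ has no common zero in $\P^3$. The polynomial $P_t$ is invariant under several symmetries: $x_1\leftrightarrow x_2$, $x_3\leftrightarrow x_4$, $(x_1,x_2)\leftrightarrow(x_3,x_4)$, and the diagonal action $\Diag(\xi_3,\xi_3^2,\xi_3^a,\xi_3^{2a})$ up to scalars. We can use these to split into cases according to which coordinates vanish. In the generic case we will eliminate variables with a Gr\"obner basis computation, or equivalently check that the discriminant of the pencil, restricted to these parameters, is nonzero.

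\emph{Counting lines.} Next we would fix a line on $Y_t$ to play the role of $L_1$. A natural choice is $V(x_1,x_3)$, on which $P_t$ vanishes identically, since every monomial of $P_t$ contains $x_1$ or $x_3$. We split the lines into those meeting it and those skew to it.
\begin{itemize}
\item \emph{Lines meeting $V(x_1,x_3)$.} We work in the pencil of planes $V(s_1x_1-s_2x_3)$ and ask when the residual quartic in each plane contains a linear factor. This reduces to a one-variable analysis, as in the proof of Theorem~\ref{thm:mainA}.
\item \emph{Lines skew to it.} We parametrize these as graphs $x_1=a_{11}x_2+a_{12}x_4$, $x_3=a_{21}x_2+a_{22}x_4$. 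Setting the six coefficients of $P_t$ restricted to such a line equal to zero gives a zero-dimensional system in the four entries $a_{jk}$. We then count its solutions with multiplicity checks, using elimination or Gr\"obner bases in exact arithmetic over $\Q(\sqrt{13})$.
\end{itemize}
Summing the two families plus $V(x_1,x_3)$ itself should give $79$. As a consistency check, the symmetry group above acts on the set of lines, so the total should be a union of orbits. We can use the group to reduce the computation to orbit representatives.

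\emph{Skew lines.} Once all lines are explicit, we compute the $79\times 79$ intersection matrix: two distinct lines meet if and only if the $4\times4$ determinant of their Pl\"ucker data vanishes. We then search for a clique of the required size in the disjointness graph, of size $21$ for $Y_2$ and $24$ for $Y_{-3\pm\sqrt{13}}$. It suffices to exhibit one such set explicitly, ideally as a union of orbits under a subgroup of the automorphisms, so that pairwise skewness can be checked by hand-sized computations.

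The main obstacle is the skew-line count. Proving that the polynomial system has exactly the expected number of solutions, with no extra or degenerate solutions such as those with some $a_{jk}=0$, requires careful case splitting. The special parameters $t=-3\pm\sqrt{13}$ presumably arise precisely where additional solutions appear, so the elimination must be done exactly rather than generically. I would first use the symmetries to normalize, for example scaling via the diagonal $\mu_3$ actions, and then factor the resultant in $t$.
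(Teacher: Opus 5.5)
Your overall plan is sound: a Jacobian check, an exhaustive computer-assisted count organized around a line on $Y_t$, then an explicit skew family. However, the chart you propose for the lines skew to $V(x_1,x_3)$ is the wrong one, and as written it would give an incorrect count.

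The graphs $x_1=a_{11}x_2+a_{12}x_4$, $x_3=a_{21}x_2+a_{22}x_4$ are exactly the lines skew to $V(x_2,x_4)$. Such a graph meets $V(x_1,x_3)$ precisely when $\det(a_{jk})=0$. A line on which $x_2$ and $x_4$ are proportional is not of this form at all. Because $V(x_2,x_4)$ also lies on $Y_t$, this matters in practice.

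On $Y_2$, take the line $x_3=x_1-x_2$, $x_4=-x_2$. This is the member of the paper's $\mathcal K_2$ with $T=\bigl(\begin{smallmatrix}1&-1\\0&-1\end{smallmatrix}\bigr)$ and $j=0$; indeed $P_t$ restricts to $(2-t)x_2^2(2x_1^3-3x_1^2x_2+3x_1x_2^2-x_2^3)$ on it. It is skew to $V(x_1,x_3)$ but meets $V(x_2,x_4)$ at $(1:0:1:0)$, so your system never sees it.

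Its image under $x_1\leftrightarrow x_2$, $x_3\leftrightarrow x_4$ is the line $x_3=-x_1$, $x_4=x_2-x_1$. This line meets $V(x_1,x_3)$ at $(0:1:0:1)$, yet it appears in your chart with $\det(a_{jk})=0$, so it would be counted twice.

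The fix is to copy the chart $L_A$ from the proof of Theorem~\ref{thm:mainA}. Express the two remaining coordinates in terms of the two that cut out the fixed line: $x_2=a_{11}x_1+a_{12}x_3$, $x_4=a_{21}x_1+a_{22}x_3$. These graphs are in bijection with the lines skew to $V(x_1,x_3)$.

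With that correction, your route is a legitimate alternative to the paper's. The paper does not single out a line on $Y_t$. Instead it runs the Pl\"ucker stratification of ${\rm Gr}(2,4)$ relative to coordinate subspaces in Macaulay2. This yields $66$ lines of the form $ax_1+bx_2+x_3=cx_1+dx_2+x_4=0$, $10$ of the form $ax_1+x_2=bx_1+cx_3+x_4=0$, and only $V(x_2,x_3)$, $V(x_1,x_4)$, $V(x_1,x_3)$ in the remaining strata.

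Your decomposition makes the meeting part essentially univariate. In the plane $x_3=\lambda x_1$ the residual quartic is $x_1^3\ell_\lambda+q_\lambda$, where $\ell_\lambda=(1+t\lambda^3)x_2+\lambda(\lambda^3+t)x_4$ and $q_\lambda=x_2^4+t\lambda x_2^3x_4+tx_2x_4^3+\lambda x_4^4$. Since $\ell_\lambda\not\equiv0$ for $t\neq\pm1$, it has a linear factor if and only if $\ell_\lambda\mid q_\lambda$, which is a resultant condition in $\lambda$. This is not quite the shape of Theorem~\ref{thm:mainA}, but it is just as tractable. The paper's stratification is more mechanical and also lets one carry $t$ as a parameter (Remark~\ref{rem:79-pencil}).

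For the skew configurations, the paper does no clique search. It writes down the $\mu_3$-orbit families $\mathcal K_2$ and $\mathcal K_t$ and verifies containment and pairwise skewness directly. That is the kind of explicit set your last step aims to produce.
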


\begin{proof}
	Smoothness follows from a direct calculation of the partial derivatives.
	We count the lines using the standard Pl\"ucker stratification of the Grassmannian ${\rm Gr}(2,4)$ of lines in $\P^3$ (see e.g., \cite[Proof of Theorem~3.1]{BS07}).
	For each of the three parameters, a computation in Macaulay2 (\cite{GS})
	gives $66$ lines of the form $ax_1+bx_2+x_3=cx_1+dx_2+x_4=0$ and $10$ lines of the form $ax_1+x_2=bx_1+cx_3+x_4=0$, with parameters in $\C$.
	The remaining strata contain precisely the three coordinate lines $V(x_2,x_3)$, $V(x_1,x_4)$, and $V(x_1,x_3)$.
	Thus each surface contains exactly $66+10+3=79$ lines.
	
	We now describe the skew configurations. Let
	$D_j=\Diag(\xi_3^j,\xi_3^{-j})$; throughout, $j,k\in\{0,1,2\}$.
	For $t=2$, let $a_i$, $1\leq i\leq4$, be the roots of
	$z^4-z^3+3z^2-z+1=0$. Let $\mathcal K_2$ consist of the lines
	$$
	\binom{x_3}{x_4}=TD_j\binom{x_1}{x_2},
	\qquad
	T\in\left\{
	\begin{pmatrix}a_i&0\\0&a_i\end{pmatrix},\
	\begin{pmatrix}0&-1\\-1&0\end{pmatrix},\
	\begin{pmatrix}1&-1\\0&-1\end{pmatrix},\
	\begin{pmatrix}-1&0\\-1&1\end{pmatrix}
	\right\}.
	$$
	
	For $t=-3\pm\sqrt{13}$, let $b_1,b_2$ be the roots of
	$2b^2-(t+2)b+2=0$, and take
	$$
	U_i=\frac1{b_i+1}
	\begin{pmatrix}
		-b_i&1-b_i^2\\
		1-b_i^2&-b_i
	\end{pmatrix},
	\qquad i=1,2.
	$$
	Let $\mathcal K_t$ ($t=-3\pm\sqrt{13}$) be the set of the lines
	$$
	\binom{x_3}{x_4}=T\binom{x_i}{x_{3-i}},
	\qquad
	T\in\{b_iD_j,\ D_jU_iD_k\},
	\qquad i=1,2.
	$$
	There are $7\cdot3=21$ choices in the first construction and
	$2(3+9)=24$ in the second.
	Direct calculation verifies that the lines in each $\mathcal K_t$
	lie on $Y_t$ and are pairwise skew.
\end{proof}

\begin{remark}\label{rem:79-pencil}
The surface $Y_t$ in the pencil is smooth precisely when $t\notin\{-5,-1,1,\frac54\}$.
	Applying the preceding counting method with $t$ as a parameter
	gives, for the smooth members,
	$$
	\#\{\text{lines on }Y_t\}=
	\begin{cases}
		79,&t=2\text{ or }t=-3\pm\sqrt{13},\\
		61,&t=-\frac12,\\
		55,&t=0,\\
		43,&\text{otherwise}.
	\end{cases}
	$$
	The member $Y_0$ is the surface studied in
	\cite[Example~2.3]{Ram02}, with $55$ lines and a configuration
	of $19$ pairwise skew lines.
\end{remark}

\subsection{A surface of degree $10$ with $356$ lines}
We conclude with an example of higher degree.
\begin{theorem}\label{thm:356-lines}
	The surface $X\subset\P^3$ of degree $10$ defined by
	$$
	(x_1x_2-3x_3x_4)(x_1^8+x_2^8)
	+2(2x_3x_4-3x_1x_2)(x_3^8+2x_4^8)=0
	$$
	is smooth and contains exactly $356$ lines, including
	$112$ pairwise skew lines.
\end{theorem}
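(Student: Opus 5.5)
Write $F=(x_1x_2-3x_3x_4)(x_1^8+x_2^8)+2(2x_3x_4-3x_1x_2)(x_3^8+2x_4^8)$ and $X=V(F)$. The plan follows the proofs of Lemma~\ref{lem:85-smoothness} and Theorem~\ref{thm:79-members}. There are three tasks: prove smoothness by an explicit analysis of the partial derivatives, count the lines stratum by stratum on ${\rm Gr}(2,4)$, and write down an explicit family of $112$ pairwise skew lines.

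\textbf{Smoothness.} The polynomial has an obvious symmetry group: the diagonal matrices $\Diag(\zeta,\zeta^{-1},\eta,\eta^{-1})$ with $\zeta^8=1$ and $\eta^8=\pm1$ suitably matched, together with $x_1\leftrightarrow x_2$. I will use it to reduce the case analysis.
\begin{itemize}
\item First treat the coordinate strata, where some $x_i=0$, directly.
\item On the open torus, write $u=x_1x_2$, $w=x_3x_4$, $A=x_1^8+x_2^8$ and $B=x_3^8+2x_4^8$. Then $F=(u-3w)A+2(2w-3u)B$.
\item Compare $x_1\partial_1F-x_2\partial_2F$ and $x_3\partial_3F-x_4\partial_4F$. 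These involve only $x_1^8-x_2^8$ and $x_3^8-2x_4^8$, with coefficients $(u-3w)$ and $(2w-3u)$.
\item These relations, together with Euler's identity and the sums $x_1\partial_1F+x_2\partial_2F$ and $x_3\partial_3F+x_4\partial_4F$, give a small polynomial system in $u,w,A,B$. I expect a resultant or Gröbner computation to show that it has no solutions.
\end{itemize}

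\textbf{Counting lines.} As in the proof of Theorem~\ref{thm:79-members}, I stratify ${\rm Gr}(2,4)$ by Plücker charts.
\begin{itemize}
\item \emph{Generic chart.} Parametrize a line as $\binom{x_3}{x_4}=T\binom{x_1}{x_2}$ with $T\in{\rm M}(2,\C)$. Substituting into $F$ gives a binary form of degree $10$ whose $11$ coefficients must vanish. Solve this system in the four unknowns of $T$ (Macaulay2 or Magma) and count the points, verifying that the scheme is reduced so that each solution is counted once.
\item \emph{Special cases.} Where $T$ is diagonal or antidiagonal, the lines are of the form $x_3=\alpha x_1,\ x_4=\beta x_2$ (or with $x_1,x_2$ swapped). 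These should produce the bulk of the lines, with $\alpha,\beta$ satisfying eighth-power conditions.
\item \emph{Remaining strata.} Handle the charts $ax_1+x_2=bx_1+cx_3+x_4=0$ and so on, and the coordinate lines. For example, $V(x_1,x_3)$ restricts to $F=2x_4^8\cdot 0+\dots$, which must be checked directly.
\end{itemize}
Summing the strata should give $356$. The group action gives a consistency check: every orbit size must divide the group order, and the orbits must sum to $356$.

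\textbf{The skew family.} I expect the $112$ pairwise skew lines to be a union of orbits under the diagonal symmetry group. Candidates are lines $\binom{x_3}{x_4}=TD\binom{x_1}{x_2}$ with $D=\Diag(\zeta,\zeta^{-1})$ for $\zeta$ ranging over eighth (or sixteenth) roots of unity, for a few base matrices $T$. Note that $112=16\cdot 7$, for instance.

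Two lines $\binom{x_3}{x_4}=T\binom{x_1}{x_2}$ and $\binom{x_3}{x_4}=T'\binom{x_1}{x_2}$ are skew if and only if $\det(T-T')\neq0$, so pairwise skewness reduces to finitely many determinant checks. The bound from Miyaoka (at most $160$ skew lines) is consistent with this.

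\textbf{Main obstacle.} The hardest step is the line count in the generic chart: an $11$-equation system of degree $10$ in four unknowns. The plan is to exploit the symmetries to normalize entries of $T$, as was done with $A=\lambda\begin{pmatrix} ru & v\\ u&1\end{pmatrix}$ in the proof of Theorem~\ref{thm:mainA}. One then eliminates using the extreme coefficients (those of $x_1^{10}$ and $x_2^{10}$) first, as in \eqref{eq1} and \eqref{eq6}, and certifies the solution count by a Gröbner basis computation over a number field.

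Finding the $112$ skew lines is a secondary difficulty. I will search among orbit unions for a set with all pairwise determinants nonzero, which amounts to a maximum-clique search in the disjointness graph of the $356$ lines.
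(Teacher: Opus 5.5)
Your plan is correct and follows the paper's proof. The paper checks smoothness by a direct partial-derivative computation, counts lines over the Plücker strata with Macaulay2 ($288$ lines of the form $ax_1+bx_2+x_3=cx_1+dx_2+x_4=0$, $65$ of the form $ax_1+x_2=bx_1+cx_3+x_4=0$, plus the coordinate lines $V(x_2,x_3)$, $V(x_1,x_4)$, $V(x_1,x_3)$), and finds the $112$ pairwise skew lines by a maximum independent set search in the dual graph of the lines. One minor slip, which does not affect the argument: the symmetries $\Diag(\zeta,\zeta^{-1},\eta,\eta^{-1})$ of $F$ are exactly those with $\zeta^{16}=\eta^{16}=1$ and $\zeta^8=\eta^8$, not those with $\zeta^8=1$.
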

\begin{proof}
	A direct calculation of the partial derivatives verifies smoothness.
	We use the counting method described in the proof of
	Theorem~\ref{thm:79-members}.
	A computation in Macaulay2 (\cite{GS}) gives $288$ lines of the form
	$ax_1+bx_2+x_3=cx_1+dx_2+x_4=0$ and $65$ lines of the form
	$ax_1+x_2=bx_1+cx_3+x_4=0$, with parameters in $\C$.
	The remaining strata contain precisely the three coordinate lines
	$V(x_2,x_3)$, $V(x_1,x_4)$, and $V(x_1,x_3)$.
	Thus $X$ contains exactly $288+65+3=356$ lines.
	 Finally, applying the function \texttt{FindIndependentVertexSet} of Mathematica (\cite{Wo}) to the dual graph of the lines gives a set of $112$ pairwise skew lines on $X$.
\end{proof}

\end{document}